\newtheorem{theorem}{Theorem}[section]
\newtheorem{lemma}[theorem]{Lemma}
\newtheorem{proposition}[theorem]{Proposition}
\newtheorem{definition}[theorem]{Definition}
\newtheorem{prop}{Proposition}[section]
\newtheorem{remark}[prop]{Remark}
\makeatletter \@addtoreset{equation}{section} \makeatother
\def\RR{{\mathrm R}}
\def\WW{{\mathrm W}}
\def\EE{{\mathrm E}}
\def\Rc{{\mathrm {Rc}}}
\def\SS{{\mathrm S}}
\begin{document}

\title{On Closed Manifolds with Harmonic Weyl Curvature}

%    Information for first author
%\author{Xiaodong Cao$^*$}

%    Address of record for the research reported here
%\address{Department of Mathematics,
% Cornell University, Ithaca, NY 14853-4201}
%\email{cao@math.cornell.edu, hungtran@math.cornell.edu}
%    Current address
%\curraddr{}
%    \thanks will become a 1st page footnote.
%\thanks{$^*$This work was partially supported by grants from the Simons Foundation (\#266211 and \#280161) and by the Jeffrey Sean Lehman Fund from Cornell University.}

%    Information for second author
\author{Hung Tran}
%\thanks{$^{\flat}$Research
%partially supported by NSF grant no. \# }

%    Address of record for the research reported here
\address{Department of Mathematics and Statistics,
Texas Tech University, Lubbock, TX 79409}
% Cornell University, New York, NY 14853-4201}
%\email{hungtran@math.cornell.edu}
%    Current address
%\curraddr{}
%    \thanks will become a 1st page footnote.
%\thanks{}

%    General info
\renewcommand{\subjclassname}{%
  \textup{2000} Mathematics Subject Classification}
\subjclass[2000]{Primary 53C21; Secondary 53C44, 53C25}
% Global differential geometry
% 53C21 Methods of Riemannian geometry, including PDE methods;
%   curvature restrictions [See also 58J60]
% 53C44 Geometric evolution equations (mean curvature flow)
% 53C55 Hermitian and K\"ahlerian manifolds [See also 32Cxx]
% Qualitative properties of solutions
% 35B35 Stability, boundedness
%Parabolic equations and systems [See also 35Bxx, 35Dxx, 35R30,
%35R35, 58J35]
%35K55 Nonlinear PDE of parabolic type
%35K90 Abstract parabolic evolution equations
% Partial differential equations on manifolds; differential operators
%58J35 Heat and other parabolic equation methods
% 58J37 Perturbations; asymptotics
% Systems theory; control - Stability
% 93D05 Lyapunov and other classical stabilities
%       (Lagrange, Poisson, $L^p, l^p$, etc.)

\date{\today}

\begin{abstract} We derive point-wise and integral rigidity/gap results for a closed manifold with harmonic Weyl curvature in any dimension. In particular, there is a generalization of Tachibana's theorem for non-negative curvature operator. The key ingredients are new Bochner-Weitzenb\"{o}ck-Lichnerowicz type formulas for the Weyl tensor, which are generalizations of identities in dimension four. %Applications are point-wise and integral rigidity/gap results when assuming harmonic Weyl curvature. 
%\begin{enumerate}
%\item study Bach-flat metrics in any dimension
%\item Brendle's PIC technique for harmonic curvature.
%\item Generalize results for solitons to higher dim and W(E,E)
%\end{enumerate}
\end{abstract}
\maketitle
\tableofcontents
%\markboth{Hung Tran} {Notes}

%%%%%%%%%%%%%%%%%%
\section{\textbf{Introduction}}%%%%
%%%%%%%%%%%%%%%%%%%%%%%%%%%%%%%%%%%%%%%%%%%%%%%%%%%%%%%%%%%%%%%%%%%
Let $(M^n,g)$ be a closed manifold. $\RR, \WW, \Rc, \EE, \SS$ denote the Riemann curvature, Weyl tensor, Ricci tensor, traceable Ricci, and scalar curvature, respectively. In fact, the Riemann curvature decomposes into three orthogonal components, namely the Weyl curvature, a multiple of $\EE\circ g$, and a multiple of $g\circ g$ ($\circ$ denotes the Kulkani-Nomizu product). All could be seen as operators on the space of two-forms. 

The study of the Einstein equation ($\Rc$ is constant) and its various curvature generalizations have a vast literature, see \cite{besse} for an overview. The Einstein condition says that the trace-less Ricci part vanishes and scalar curvature is constant; thus, in dimension at least four, the problem of understanding the Riemann curvature on Einstein manifolds reduces to investigate the Weyl tensor. 

As a consequence, this paper aims to study a structure which generalizes the Einstein condition and involves the Weyl tensor. The outcomes are rigidity results.  

%ur interest is about a generalizing condition in terms of that tensor, namely harmonic Weyl curvature, and rigidity results.
 
For an Einstein manifold, S. Tachibana showed that nonnegative curvature operator implies locally symmetric \cite{tachibana}. Recently, S. Brendle improved the result by only assuming nonnegative isotropic curvature \cite{brendle10einstein}. Those curvature assumptions are pointwise pinching $\WW$ against scalar curvature. Similarly, integral pinching of $\WW$, in terms of its $L^{n/2}$-norm, was also investigated, such as \cite{singer92, is02, catino15integral}  

A generalization of the Einstein condition is harmonic curvature: the former is $\Rc=\lambda g$ while the latter only requires $\Rc$ to be a Codazzi tensor. This condition is also of interests due to its connection to the theory of Yang-Mills equation. As a result, the topic has been studied in a wide array of literature, such as \cite{derd82, kim11, hv96, tv05, fu15pinched}. In particular, rigidity results due to integral pinched conditions were obtained in \cite{hv96, fu15pinched}. Also, A. Gray \cite{gray78} deduced a classification under a point-wise assumption. 

In connection with our interests, harmonic curvature is characterized by harmonic Weyl curvature (divergence of $\WW$ is vanishing) and constant scalar curvature. Thus, it is natural to investigate the condition of harmonic Weyl curvature separately. In this area, most of the research has focused on the case of K\"{a}hler or dimension four. A K\"{a}hler manifold with that condition must have parallel Ricci tensor \cite{gray78, tanno72, mat72}. In dimension four, Weyl decomposes into self-dual and anti-self-dual parts ($\WW^{\pm}$). A. Polombo \cite{polombo92} derived a Bochner-Weitzenb\"ock formula for a Dirac operator and obtained applications for $\WW^{\pm}$. Then, M. Micallef and M. Wang \cite{mw93} proved that a closed four-dimensional manifold with harmonic self-dual and non-negative isotropic curvature (on the self-dual part) must be either conformally flat or quotient of a K\"{a}hler manifold with constant scalar curvature. As mentioned earlier, the curvature assumption is equivalent to the point-wise bound: for any eigenvalue $\omega$ of $\WW^+$, $$\omega \leq \frac{\SS}{6}.$$ 
Furthermore, Gursky \cite{g00} made use of the point-wise Bochner-Weitzenb\"ock  formula for Weyl tensor to derive optimal $L^2$-gap theorems. \\

In this paper, we derive analogous rigidity results for a closed Riemannian manifold with harmonic Weyl curvature in any higher dimension. First, we show a generalization of Tachibana's theorem. 
\begin{theorem}
\label{nonnegative}
Let $(M^n,g)$, $n\geq 5$, be a closed Riemannian manifold with harmonic Weyl curvature and non-negative curvature operator. Then $(M,g)$ must be either locally conformally flat or locally symmetric. 
\end{theorem}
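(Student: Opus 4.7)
The plan is to adapt Tachibana's strategy from the Einstein case, where the Weyl tensor is automatically divergence-free, to the harmonic Weyl setting by producing a Weitzenb\"ock formula for $\WW$ that closes up despite the non-constancy of the Ricci tensor. Since $\delta\WW=0$ is equivalent, via the contracted second Bianchi identity, to the Schouten tensor $A=\Rc-\frac{\SS}{2(n-1)}g$ being a Codazzi tensor, one has good control over the first-order coupling terms that would otherwise obstruct such a formula.

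First, I would derive a Bochner-Weitzenb\"ock-Lichnerowicz identity of the schematic form
\begin{equation*}
\frac{1}{2}\Delta|\WW|^2 = |\nabla\WW|^2 + Q(\WW,\Rm) + \mathcal{L}(\WW,A),
\end{equation*}
where $Q$ is a quadratic expression in $\WW$ coupled to the full curvature operator, and $\mathcal{L}$ collects lower-order terms linking $\WW$ with the Schouten tensor that arise from commuting covariant derivatives and invoking the second Bianchi identity. The $\mathcal{L}$ piece is the feature absent from the Einstein case; it must be handled using the Codazzi property of $A$, ideally by rearranging it after integration by parts into either a divergence or a manifestly signed quantity.

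Next, the key algebraic step is to verify that $Q(\WW,\Rm)$ is pointwise non-negative whenever the curvature operator is non-negative. The natural framework is to regard $\WW$ as a symmetric endomorphism of $\Lambda^2$ and expand $Q$ in an orthonormal basis of $\Lambda^2$ that diagonalizes $\Rm$; the resulting expression should be a sum whose coefficients are eigenvalues of $\Rm$, hence non-negative by hypothesis. The first Bianchi identity and tracelessness of $\WW$ are indispensable for carrying out this decomposition cleanly. Integrating the identity over the closed manifold then forces $\nabla\WW\equiv 0$, together with equality in the curvature bound at every point.

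From $\nabla\WW=0$ the dichotomy follows: either $\WW\equiv 0$, in which case $(M,g)$ is locally conformally flat, or $\WW$ is a nonzero parallel tensor. In the latter case, substituting $\nabla\WW=0$ back into the uncontracted second Bianchi identity for the Riemann tensor, together with the decomposition of $\Rm$ into its Weyl and Schouten parts, yields an algebraic constraint on $\nabla A$; in dimension $n\geq 5$, combined with the Codazzi property of $A$ and the equality case of the curvature inequality, this should force $\nabla A=0$, hence $\nabla\Rc=0$ and constancy of $\SS$, and therefore $\nabla\Rm=0$ as desired. The main obstacle I foresee is the first step: deriving and correctly signing the Bochner formula, especially the coupling term $\mathcal{L}(\WW,A)$ that is trivial in the Einstein setting but genuinely active here, so that the Codazzi-Schouten structure delivers a useful integral inequality rather than an inconclusive one; a secondary difficulty is upgrading the equality case from a Codazzi-type statement about $A$ to full parallelism of the Ricci tensor.
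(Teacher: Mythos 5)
Your first two steps are essentially the paper's own route: the paper proves exactly such an identity (Theorems \ref{main1} and \ref{tachi}), where harmonic Weyl enters through Lemma \ref{bianchimap} (with $\delta\WW=0$ the Weyl tensor itself satisfies the second Bianchi identity, which is what makes the integration by parts close up), and the sign of the zero-order term under non-negative curvature operator is obtained by Tachibana's classical device (the skew-symmetric tensors $u^{(mnpq)}$), i.e. your ``diagonalize the curvature operator on $\Lambda^2$'' step, quoted in the paper from \cite{tachibana} and \cite[Lemma 7.33]{chowluni}. So up to the conclusion $\nabla\WW\equiv 0$ your outline is sound and not genuinely different from the paper.

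The gap is the final step. From $\nabla\WW\equiv 0$ you propose to extract $\nabla A=0$ by feeding parallelism of $\WW$ back into the second Bianchi identity together with the Codazzi property of the Schouten tensor, but that constraint is vacuous: $B(\RR)=0$ with $\nabla\WW=0$ gives $B(A\circ g)=0$, which by Lemma \ref{bianchimap} says precisely that $A$ is a Codazzi tensor --- exactly the information you already had from $\delta\WW=0$. It yields no new restriction on $\nabla A$, so ``this should force $\nabla A=0$'' is the missing idea, not a step. Moreover no identity-level argument of this kind can succeed without using positive definiteness in an essential way: there exist (pseudo-Riemannian) conformally symmetric manifolds with $\nabla\WW=0$ that are neither conformally flat nor locally symmetric, and they satisfy all the identities you invoke. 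The passage from parallel Weyl to the stated dichotomy in the Riemannian case is exactly the nontrivial theorem of Derdzinski and Roter \cite[Theorem 2]{dr77}, which the paper cites at this point; your alternative suggestion of exploiting the equality case of the curvature inequality is only mentioned, not argued, and is not needed once that theorem is available. Either quote Derdzinski--Roter or supply an actual proof of that dichotomy; as written the proposal does not close.
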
 
\begin{remark} In dimension four, the same statement holds and is a weaker version of Micallef and Wang's theorem. \end{remark}
\begin{remark} We also obtain other results in terms of pinching eigenvalues and norms, see Theorems \ref{mainpointwise} and \ref{normthm}. 
\end{remark}

Next, by using the solution to the Yamabe's problem, we obtain an integral gap.
\begin{theorem} 
\label{mainintegral}
Let $(M^n,g)$, $n\geq 6$, be a closed Riemannian manifold with harmonic Weyl tensor and positive Yamabe constant $\lambda(g)$. Then, 
\[a_1(n)||\WW||_{L^{n/2}}+a_2(n)||\EE||_{L^{n/2}}\geq \frac{n-2}{4(n-1)}{\lambda(g)},\]
unless $(M,g)$ is locally conformally flat. Here, for $\alpha$ the bigger root of the quadratic,
\[8(n-1)^2\alpha^2-2n(n-1)(n-2)\alpha+n(n-2)(n-3)=0,\] we have
% $c(5)=3$, $c(6)=\sqrt{70/3}$ and $c(n)=5$ for $n\geq 7$ 
%Here for $c(n)$ the same as in Lemma \ref{atleast5norm},
\begin{align*}
a_1(n) &=\frac{10(n-1)\alpha^2}{2(n-1)\alpha-n+3} \text{,} &a_2(n) &=\frac{2(n-1)\alpha^2}{2(n-1)\alpha-n+3}\sqrt{\frac{n-1}{n}}. 
\end{align*}
\end{theorem}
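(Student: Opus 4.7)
The strategy is to couple the new Bochner--Weitzenb\"ock formula for $\WW$ (announced earlier in the paper) with the Sobolev inequality supplied by the positive Yamabe invariant $\lambda(g)$: for any smooth $u>0$,
\[\lambda(g)\left(\int_M u^{2n/(n-2)}\,dV\right)^{(n-2)/n} \le \int_M\Bigl(\tfrac{4(n-1)}{n-2}|\nabla u|^2 + \SS\, u^2\Bigr)\,dV.\]
The natural test function is $u = |\WW|^{(n-2)/4}$, so that $u^{2n/(n-2)}=|\WW|^{n/2}$ and the left side becomes a constant multiple of $\|\WW\|_{L^{n/2}}^{(n-2)/2}$, matching the homogeneity of the target estimate.

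For the Bochner side, the starting point is the pointwise identity
\[\tfrac{1}{2}\Delta|\WW|^2 = |\nabla\WW|^2 + Q(\WW,\EE,\SS),\]
where $Q$ is the curvature correction produced under the harmonic-Weyl hypothesis by the new identity of this paper and consists of a multiple of $\SS\,|\WW|^2$ together with a cubic-type term in $\WW$ and a mixed $\EE\cdot\WW\cdot\WW$ term. Combining this with the refined Kato inequality for harmonic Weyl tensors, $|\nabla\WW|^2\ge k_n|\nabla|\WW||^2$ with $k_n>1$ dimensional, produces a pointwise lower bound on $|\WW|\Delta|\WW|$. Multiplying by an appropriate power of $|\WW|$ and integrating by parts converts the Laplacian into a positive multiple of $\int|\WW|^{(n-6)/2}|\nabla|\WW||^2\,dV$; H\"older's inequality then bounds the cubic term by $\|\WW\|_{L^{n/2}}^{n/2}$ and the mixed term by $\|\EE\|_{L^{n/2}}\|\WW\|_{L^{n/2}}^{(n-2)/2}$, the factor $\sqrt{(n-1)/n}$ entering through a sharp Cauchy--Schwarz bound on the $\EE\cdot\WW\cdot\WW$ contraction.

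The concluding step is to form a linear combination of this integrated Bochner inequality with the Yamabe Sobolev inequality (applied to $u=|\WW|^{(n-2)/4}$) so that the two occurrences of $\int\SS|\WW|^{(n-2)/2}\,dV$ cancel and the residual coefficient on the gradient integral is non-negative. The balance condition that determines the optimal weighting parameter is precisely the quadratic
\[8(n-1)^2\alpha^2-2n(n-1)(n-2)\alpha+n(n-2)(n-3)=0\]
in the statement; the larger root $\alpha$ yields the sharpest coefficients, and the factor $2(n-1)\alpha-n+3$ appearing in the denominator of $a_1,a_2$ is exactly the residual coefficient on the gradient term at that critical point. Dividing through by $\|\WW\|_{L^{n/2}}^{(n-2)/2}$, which is legitimate unless $\WW\equiv 0$ (i.e.\ $(M,g)$ is locally conformally flat), and assembling the constants from the Bochner cubic/mixed coefficients, the H\"older factors, the Kato constant, and the integration-by-parts factor produces precisely $a_1(n)$ and $a_2(n)$.

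The principal obstacle is pinning down the coefficients of $Q$ in the new Bochner formula sharply enough that the elimination actually closes and reduces to the displayed quadratic; this is exactly where the Bochner--Weitzenb\"ock--Lichnerowicz identity for $\WW$ developed earlier in the paper does the crucial work. A secondary technical point is the explicit value of the refined Kato constant $k_n$ for harmonic Weyl tensors in dimension $n\ge 6$, which feeds linearly into the coefficient bookkeeping and whose form ultimately influences both $a_1$ and $a_2$ through the critical quadratic.
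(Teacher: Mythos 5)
Your overall architecture is the right one and matches the paper's in outline (pointwise Bochner formula from Theorem \ref{pointwiseBW}, refined Kato for harmonic Weyl, the bounds $\left\langle{\WW,\WW^2+\WW^\sharp}\right\rangle\le 5|\WW|^3$ and $-\left\langle{\EE\circ g,\WW^2}\right\rangle\le 2\sqrt{\tfrac{n-1}{n}}|\EE||\WW|^2$, H\"older, the Yamabe--Sobolev inequality, and division by a Weyl norm unless $\WW\equiv 0$). But there is a genuine gap in how you make the scalar-curvature terms cancel, and it is precisely where the quadratic enters. You fix the test function at $u=|\WW|^{(n-2)/4}$ and propose to tune a \emph{weight} in a linear combination of the integrated Bochner inequality and the Sobolev inequality. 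With that fixed exponent $\beta=\frac{n-2}{4}$, the Bochner side carries $\frac{2\beta}{n}\int_M\SS|\WW|^{2\beta}$ and a gradient coefficient $A(n,\beta)=2-\frac{4(n-3)}{(n-1)(n-2)}$ (after Kato and integration by parts), while exact cancellation of the $\SS$-terms forces the Sobolev weight $t=\frac{2\beta}{n\,s_n}=\frac{2(n-1)}{n}$. One checks $A(n,\beta)-t=\frac{2}{n}-\frac{4(n-3)}{(n-1)(n-2)}<0$ for all $n\ge 4$ (e.g.\ $-\frac{4}{15}$ at $n=6$), so the residual gradient term has the wrong sign and cannot be discarded; and if instead you take $t\le A$ so the gradient term is fine, a positive multiple of $\int_M\SS|\WW|^{2\beta}$ survives, which has no sign because positive Yamabe invariant does not give pointwise $\SS\ge 0$ (this is exactly why the $n=5$ case, Theorem \ref{case5}, needs positive scalar curvature as an extra hypothesis). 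So the scheme as you describe it does not close. Note also that $\alpha=\frac{n-2}{4}$ is \emph{not} a root of the stated quadratic (substitution gives $\frac{(n-2)(n^2-3n-2)}{2}>0$), so your identification of the test-function exponent is inconsistent with your identification of the quadratic.

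The correct mechanism, and the paper's, is to let the exponent itself be the free parameter: set $u=|\WW|$ and work with $u^\alpha$, derive
$0\ge\bigl(2-\tfrac{n-3}{(n-1)\alpha}\bigr)\int_M|\nabla u^\alpha|^2+\tfrac{2\alpha}{n}\int_M\SS u^{2\alpha}-10\,\alpha\int_M u^{2\alpha+1}-2\alpha\sqrt{\tfrac{n-1}{n}}\int_M|\EE|u^{2\alpha}$,
split the last two integrals by H\"older against $\||\WW|^\alpha\|^2_{L^{2n/(n-2)}}$ (for general $\alpha$ the cubic term is \emph{not} $\|\WW\|_{L^{n/2}}^{n/2}$), apply the Sobolev inequality to $u^\alpha$, and choose $\alpha$ to be the larger root of $8(n-1)^2\alpha^2-2n(n-1)(n-2)\alpha+n(n-2)(n-3)=0$ so that the coefficient $\frac{2\alpha}{n}-A(n,\alpha)s_n$ of $\int_M\SS u^{2\alpha}$ vanishes identically. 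Dividing by $\||\WW|^\alpha\|^2_{L^{2n/(n-2)}}$ (not by a power of $\|\WW\|_{L^{n/2}}$) then yields $\alpha c_1\|\WW\|_{L^{n/2}}+\alpha c_2\|\EE\|_{L^{n/2}}\ge s_n\lambda(g)A(n,\alpha)$, which is the stated inequality with $a_i=\alpha c_i/A(n,\alpha)$; the factor $2(n-1)\alpha-n+3$ is the numerator of $A(n,\alpha)$, i.e.\ the coefficient that multiplies the gradient integral before it is wholly absorbed by Sobolev, not a residual left over after a weighted combination.
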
 
\begin{remark} For large $n$, $\alpha \approx n/4$ and $a_1\approx 5n/4$ and $a_2\approx n/4$. 
\end{remark}
\begin{remark} The cases $n=5$ and $n=4$ are treated somewhat differently, see Theorem \ref{case5} and Section \ref{fourdim} respectively.
%Restricted to the case of harmonic curvature or Einstein manifolds, our methods recover known results and yield refined estimates. % details will be given elsewhere. % That is because of two reasons: first, we start with an equality instead of an inequality; and second, we use sharper algebraic inequalities in our proof, see Lemma \ref{atleast5}.
\end{remark}
\begin{remark} When the manifold is locally conformally flat, there are rigidity results of the same flavor for constant scalar curvature and pinching condition on $|\EE|$; see \cite{grs07conformal, catino16conf} and references therein. 
\end{remark}
Note that, if one desires classification of Riemannian manifolds by diffeomorphism types, there are following rigidity results. Several authors, G. Huisken \cite{Huisken}, C. Margerin \cite{margerin86}, and S. Nishikawa \cite{nishi86}, independently showed that pointwise pinching on the Weyl and traceless Ricci curvature imply diffeomorphic to a space form. E. Hebey and M. Vaugon \cite{hv96} extended it to $L^{n/2}$ pinching. In dimension four, Margerin \cite{margerin98} was able to obtain optimal pointwise estimates. Furthermore, by using a solution to a fourth order fully nonlinear equation in conformal geometry, S. Chang, M. Gursky, and P. Yang \cite{CGY03} proved an $L^2$-sharp version.  

Putting everything together, the picture becomes clearer. If $\WW$ and $\EE$ are small, either in point-wise or in integral sense, then $(M,g)$ must be diffeomorphic to a space form. What is more, if it has harmonic Weyl curvature then isometrically $(M,g)$ must be conformally flat or locally symmetric.\\

The key ingredients in our proof are new Bochner-Weitzenb\"ock-Lichnerowicz (BWL) type identities connecting the gradient and the divergence of $\WW$. Here is a prototype.
\begin{theorem}
\label{main1}
For a closed manifold $(M^n,g)$, $n\geq 4$, we have: 
\begin{align}
\label{BWintegral}
\int_M |\nabla \WW|^2-\frac{(n-2)}{n-3}\int_M |\delta\WW|^2&=-2\int_M \left\langle{\WW^{\flat},\delta(\nabla \WW)-\nabla (\delta\WW)}\right\rangle \nonumber \\
&= 2\int_M \left\langle{\WW,\WW^2+\WW^\sharp}\right\rangle -\int_M\left\langle{\Rc\circ g, \WW^2}\right\rangle.
\end{align}
Here, $\WW^\flat_{iklj}=\WW_{jikl}$.
\end{theorem}
\begin{remark}
For relevant notation, see Section \ref{notation}.
\end{remark}
\begin{remark}
In dimension four, the result could be deduced from the work of Chang, Gursky, and Yang \cite[Equation 3.23]{CGY03}, see Section \ref{fourdim}. Also, A. Lichnerowicz \cite{lich58} derived an identity relating the gradient and the divergence of Riemann curvature, which is a vital step in the proof of Tachibana's theorem. 
\end{remark}
\begin{remark} For more identities, see Section \ref{gradvsdiv}.
\end{remark}

The idea is as follows. BWL identities relate zero-order terms (without derivatives) and derivative terms of some curvature quantity; see \cite[Chapter 7]{pe06book} or \cite{caotran1}. Then, appropriate conditions on curvature lead to a definite sign of the zero-order terms. That sign, however, might be opposite to the sign of the derivative terms, leading to some contradiction. Thus, generally our argument relies on new identities to conclude that $\nabla \WW\equiv 0$. Then, by a result of A. Derdzinski and W. Roter \cite[Theorem 2]{dr77}, parallel Weyl implies either locally conformally flat or locally symmetric. 
 
%In fact, the main ingredient
%Einstein manifold: %The Riemannian curvature tensor, $\RR$, restricts to $\mathfrak{Rc}(T_x M)$ trivially at each point $x\in M$.  

The organization of the paper is as follows. In Section \ref{notation}, we explain our convention and recall several preliminaries. Section \ref{gradvsdiv} discusses the connection between gradient and divergence of the Weyl tensor and proves several identities including Theorem \ref{main1}.  In Section \ref{gap}, we obtain rigidity theorems, pointwise and integral. %Section \ref{fourdim} demonstrates how dimension four is special. 
Finally, the Appendix collects some useful computation and explain why dimension four is special.  
 
%Catino also studies rigidity due to integral pinched conditions for gradient Ricci solitons in dimension $4\leq n\leq 6$ (why only those dimensions?) \cite{catino15integral}.
{\bf Acknowledgments:} We thank Professors Xiaodong Cao, Richard Schoen, and Bennett Chow for various discussion and suggestions. % and  for continual discussion and encouragement. Also the author would like to thank anonymous referees for constructive suggestions and reference \cite{zhu13}. 

\section{\textbf{Preliminaries}}
\label{notation}
Throughout the paper, we adopt the following notation.
\begin{itemize}
\item $\{e_i\}_1^n$ is a normal orthonormal frame. 
\item $\wedge^k$ is the space of $k$-forms, or fully anti-symmetric $(k,0)$ tensor. 
\item $S^2(.)$ is the space of self-adjoint/symmetric operators. 
\item $S_0^2(.)$ is the space of traceless symmetric operators.
\end{itemize}
The Riemann curvature $(3,1)$ tensor and $(4,0)$ tensor are defined as follows,
\begin{align*}
\RR(X,Y,Z)&=-\nabla^2_{X,Y}Z +\nabla^2_{Y,X}Z\\
\RR(X,Y,Z,W)&=-\left\langle{\nabla^2_{X,Y}Z -\nabla^2_{Y,X}Z, W}\right\rangle_g.
\end{align*}
\begin{remark}
Our (3,1) curvature sign agrees with \cite{besse, brendlebook10}, but not \cite{chowluni, H3, pe06book}. Our (4,0) curvature convention is the same as \cite{besse, H3, brendlebook10}, but not \cite{chowluni, pe06book}.% As a result,
%\[\RR_{ijk}^l=\frac{\partial}{\partial e_j}\Gamma_{ik}^l-\frac{\partial}{\partial e_i} \Gamma^{l}_{jk}+\Gamma^{m}_{ik}\Gamma_{im}^l-\Gamma_{jk}^m\Gamma^l_{im}.\]
\end{remark}
Consequently, we have the Ricci identity, for a $p$-tensor S,
\begin{align}
\label{ricciidentity}
&(\nabla^2_{X,Y} S)(Z_1,...,Z_p)-(\nabla^2_{Y,X} S)(Z_1,...,Z_p),\\
&=\sum_{i,k}\RR(X,Y,Z_i, e_k)S(...Z_{i-1},e_k, Z_{i+1}..).\nonumber
\end{align}

For any $(m,0)$-tensor $T$, its divergence is defined as
\[(\delta T)_{p_{2}...p_{m}}\doteqdot \sum_{i} \nabla_{i}T_{p_{2}...p_{m}i}. \]

 For $T\in S^2(\wedge^2)$, $|T|$ denotes the operator norm (which is $1/2$ of the tensor norm). Accordingly, the convention of norms is as follows. 
 \begin{itemize}
 \item If $T\in T^{*}\otimes S^2(\wedge^2)$ , $|T|^{2}=\sum_{i}\sum_{a<b; c<d}(T_{iabcd})^2. $
\item If $T \in \wedge^2 \otimes T^{*} $, $|T|^2=\sum_{i}\sum_{a<b}(T_{abi})^2. $
\item If $T\in \Lambda^3\otimes \Lambda^2$, $|T|^{2}=\sum_{i<j<k}\sum_{a<b}(T_{ijkab})^2.$
\end{itemize} 

%We also frequently adopt Einstein summation convention.

\subsection{Algebraic curvature} 
\label{algC}
The exposition here follows from \cite[Chapter 1.G]{besse}. 

Let V be a real vector space equipped with a non-degenerate quadratic form $g$ and an orthonormal basis $\{e_i\}_{i=1}^n$. %which is a symmetric 2-tensor 
Associated with $g$ is an orthogonal group $O(g)$. %We denote by $S_0^2(V)$ the space of traceless symmetric 2-tensors and $\wedge^2 (V)$ the space of $2$-forms. 
The irreducible orthogonal decomposition of the $O(g)$-module $\otimes ^2 V$ is the following,
\[ \otimes^2 V= \wedge^2 V\oplus S_0^2(V) \oplus \mathbb{R}g.\]

We recall the Bianchi map on $\otimes^4 (V)$,
\[b(R)(X,Y,Z,W)\doteqdot \frac{1}{3}(R(X,Y,Z,W)+R(Y,Z,X,W)+R(Z,X,Y,W)).\]

Hence we have the $GL(V)$-equivariant decomposition,
\[S^2 (\wedge^2 V)=\text{Ker}b\oplus \text{Im}b.\] 

\begin{definition} $\mathfrak{C}V\doteqdot \text{Ker}(b)\cap S^2(\wedge^2 V)$ is the space of algebraic curvature tensor. 
\end{definition}

We recall the Ricci contraction, an $O(g)$-equivariant map $S^2 (\wedge^2 V)\mapsto S^2(V)$,
\[ rc(R)(X,Y)=\text{tr}_g R(X,.,Y,.).\]

Conversely, the Kulkarni-Nomizu product gives a canonical way to build an element of $S^2(\wedge^2 V)$ from two elements of $S^2(V)$.
\begin{align*}
(h\circ k) (X,Y,Z,W)= & ~h(X,Z)k(Y,W)+ k(X,Z)h(Y,W)\\
&-h(X,W)k(Y,Z)-k(X,W)h(Y,Z).\end{align*}

The map $g\circ .: S^2(V)\mapsto \mathfrak{C}V$ is injective and its adjoint is precisely $rc$. 
\begin{equation}
\label{selfadjoint}
 \left\langle{g\circ k, R}\right\rangle=\left\langle{k, rc(R)}\right\rangle .\end{equation}

If $n\geq 4$, the $O(g)$-module $\mathfrak{C}V$ has the following decomposition into unique irreducible subspaces, 
\begin{align*}
\mathfrak{C}V &= \mathfrak{S}V\oplus \mathfrak{Rc}V\oplus \mathfrak{W}V;\\
 \mathfrak{S}V &=\mathbb{R}g\circ g,\\
 \mathfrak{Rc}V &=q\circ (S_0^2 V),\\
 \mathfrak{W}V &=\text{Ker}(rc)\cap \text{Ker}b.
\end{align*}
Conveniently, $\mathfrak{S}V$ is just a multiple of the identity, $\mathfrak{Rc}V$ the traceless Ricci part, and $\mathfrak{W}V$ the Weyl curvature. By symmetry, $\mathfrak{W}V$ is trivial in dimension less than four.  

Next, we collect some definitions of quadratic product on algebraic curvature tensors (they are also related to Ricci flow, see, for example, \cite{H3, bohmwilking, brendlebook10}).
\begin{definition} For algebraic curvature tensors $R,S$, we define, 
\begin{align}
\label{Rsquared}
(R.S)_{ijkl}&=\frac{1}{2}\sum_{p,q}R_{ijpq}S_{klpq},\\
\label{Rsharp}
(R\sharp S)_{ijkl}&=\frac{1}{2}\sum_{p,q}\Big[R_{ipkq}S_{jplq}+S_{ipkq}R_{jplq}\\
&-R_{iplq}S_{jpkq}-S_{iplq}R_{jpkq}];\nonumber\\
R^2(X,Y,Z,W)&=(R.R)(X,Y,Z,W)=\frac{1}{2}\sum_{p,q}R(X,Y,e_{p},e_{q})R(Z,W,e_{p},e_{q})\nonumber \\
R^{\sharp}(X,Y,Z,W) &=(R\sharp R)(X,Y, Z,W)\nonumber\\
&=\sum_{p,q}(R(X,e_{p},Z,e_{q})R(Y,e_{p},W,e_{q})-R(X,e_{p},W,e_{q})R(Y,e_{p},Z,e_{q}))\nonumber.
\end{align}
\end{definition}
\begin{remark} It is clear that both products are distributive with respect to addition. Also $\sharp$ is commutative. 
\end{remark}
Even though $R^2$ and $R^\sharp$ are not necessarily in $\mathfrak{C}V$, $R^2+R^\sharp$ is. Furthermore, 
\begin{equation}
\label{rcsimplify}
rc(R^2+R^\sharp)(X,Y)=\sum_{p,q}R(X,e_p,Y,e_q)rc(R)(e_p, e_q).\end{equation}
Finally, the 3-linear map, 
\begin{equation}
\text{tri}(R_1, R_2, R_3)\doteqdot \left\langle{R_1.R_2+R_2.R_1+2R_1\sharp R_2,R_3}\right\rangle, 
\end{equation}
is symmetric in all three components $R_1, R_2, R_3\in S^2(\wedge^2 V)$. 
\subsection{Curvature Decomposition}
Using the algebraic disucssion above, the Riemann curvature has the following decomposition:
\begin{equation*}
\RR=\WW+\frac{\SS g\circ g }{2n(n-1)} + \frac{\EE \circ g}{n-2}=\WW-\frac{\SS g\circ g}{2(n-2)(n-1)} + \frac{\Rc \circ g}{n-2}.
\end{equation*}   
We note that, as $(4,0)$ tensors at a point $x$, $\WW\in \mathfrak{W}(T_xM)$, $\EE\circ g\in \mathfrak{Rc}(T_xM)$, and $\SS g\circ g\in \mathfrak{S}(T_xM)$ are orthogonal components. Consequently,
\begin{align*}
\RR &= \WW+\frac{\SS g\circ g }{2n(n-1)} + \frac{\EE \circ g}{n-2}=\WW+U+V, \\
|\RR|^2 &=|\WW|^2+|U|^2+|V|^2, \\
|U|^2 &=\frac{1}{2n(n-1)}\SS^2,\\
|V|^2 &=\frac{1}{n-2}|\EE|^2.
\end{align*}
The Weyl curvatutre is an algebraic curvature which is trace-less, hence, 
\[W_{1212} =\sum_{2<i<j}W_{ijij}.\] 
\begin{remark}
More generally, decompose the tangent space into orthogonal sub-spaces $N_1$ and $N_2$. Then for any orthonormal bases of $N_1$ and $N_2$, by the trace-less property, 
\begin{equation}
\label{weylsect}
\WW_{N_1}\doteqdot \sum_{i<j, i,j\in N_{1}}W_{ijij} =\sum_{k<l, k,l\in N_{2}}W_{klkl}\doteqdot \WW_{N_2}.\end{equation}
So the Weyl ``generalized sectional curvatures'' $\WW_{N_i}$ is independent of the choice of basis and, thus, well-defined. And those of complementing sub-spaces are equal. 
\end{remark}
%Also, it is noted that if the co-dimension of $N_1$ is 0  or 1 then $\WW_{N_1}=0$. 

In dimension four, the decomposition becomes,
\[\RR =\WW+\frac{\SS}{24} g\circ g+ \frac{1}{2}\EE \circ g.\]

A special feature of dimension four is that the Hodge $\ast$ operator decomposes the space of two-forms orthogonally according to eigenvalues $\pm1$: $\wedge^{2}=\wedge^2_{+}\otimes \wedge^2_{-}$. Also, we observe that $\WW(\Lambda^{2}_{\pm})\in \Lambda^{2}_{\pm}$; thus, it is unambiguous to define $\WW^{\pm}\doteqdot \WW^{|\Lambda_{\pm}}$

Accordingly, the curvature as an operator on $\wedge^2$ is, 
\begin{equation}
\label{curdec}
\RR=
 \left( \begin{array}{cc}
A^{+} & C \\
C^{T} & A^{-} \end{array} \right).
\end{equation}
Here, C is essentially the traceless Ricci. In addition, 
\begin{align*}
A^{\pm} &=\WW^{\pm}+\frac{S}{12}\text{Id}^{\pm},\\
|A^{\pm}|^{2} &=|\WW^{\pm}|^{2}+\frac{\SS^{2}}{48}, \\
|\Rc|^2-\frac{\SS^2}{4}&=|\EE|^2=4|C|^2=4\text{tr}(CC^T).
%|A^{\pm}|_{2}+2\text{tr}((A^{\pm})^{\sharp}) &=\text{tr}(A^{\pm})^2=\frac{S^{2}}{16},\\
%2\text{tr}((A^{\pm})^{\sharp}) &=\frac{\text{scal}^{2}}{24}-|W^{\pm}|^{2}.
\end{align*}
%In particular, with $\alpha^{\pm}$ and $\beta^{\pm}$ the projection of $\alpha, \beta$ onto $\Lambda^{\pm}_{2}$,
%\begin{equation}
%\label{calpm}
%\WW^{\pm}(\alpha, \beta)=\WW(\alpha^{\pm}, \beta^{\pm}).
%\end{equation}
%%%%%%%%%%%%%%%%%%%%%%%%%%%%%%
%If the metric is K\"{a}hler, then $\WW^{+}=u C$ for $u$ a smooth function and $C=\text{diag}(-1, -1,2)$. In particular, Kahler metric is conformally equivalent to a metric with harmonic self-dual, for $\tilde{g}=u^{-2}g$, \cite{g00, derd83},
%\[\delta\widetilde{\WW}^{+}=u\delta\WW^+ +\WW^{+}(\nabla u,.,.,.)=0.\]

\section{\textbf{Gradient Versus Divergence}}
\label{gradvsdiv}
This section investigates the connection between gradient and divergence of the Weyl curvature. To be compatible with the discussion of Subsection \ref{algC}, let $\mathfrak{X}(TM)$ be the space of all smooth sections such that at a point $x$, each section is an algebraic element of $\mathfrak{X}(T_{x}M)$. 

\subsection{Pointwise Identities} 
Recall the following operators, for $T\in \mathfrak{C}(TM)$, $A\in \Lambda^2\otimes T^{*}M$
\begin{align}
B(T)_{ijkmn}&=\nabla_i T_{jkmn}+\nabla_j T_{kimn}+\nabla_k T_{ijmn};\\
(A\circ' g)_{ijkmn}&=g_{kn}A_{ijm}+g_{in}A_{jkm}+g_{jn}A_{kim}\nonumber \\
&+g_{km}A_{jin}+g_{im}A_{kjn}+g_{jm}A_{ikn}.
\end{align}
\begin{remark} If $b$ corresponds to the first Bianchi identity, $B$ does the second. Also, \[B(T), A\circ' g\in \Lambda^3\otimes \Lambda^2.\] 
\end{remark} 
\begin{lemma}
Suppose $(M^n,g)$ is a Riemannian manifold and $A\in \Lambda^2\otimes T^{*}M$ such that for any orthonormal frame, 
\[\sum_{i}A_{iji}=0.\] 
Then, 
\[|A\circ' g|^2= (n-3)|A|^2\]
\end{lemma}
\begin{proof} The statement follows from a tedious computation. Below are the key steps. 
\begin{align*}
|A\circ' g|^2&=\frac{1}{12}\sum_{i,j,k,m,n}(A\circ' g)_{ijkmn}^2;\\
\sum_{i,j,k, m,n} (g_{kn}A_{ijm})^2&=n\sum_{i,j,m}A_{ijm}^2=2n|A|^2;\\
\sum_{i,j,k, m,n} g_{kn}g_{in}A_{ijm}A_{jkm}&=\sum_{j,m,n}A_{njm}A_{jnm}=-2|A|^2;\\ 
\sum_{i,j,k, m,n} g_{kn}g_{km}A_{ijm}A_{jin}&=\sum_{i,j,k}A_{ijk}A_{jik}=-2|A|^2;\\
\sum_{i,j,k, m,n} g_{kn}g_{im}A_{ijm}A_{kjn}&=\sum_{i,k,j}A_{iji}A_{kjk}=0.
\end{align*}
\end{proof}

Also, for a Riemann curvature, $B(\RR)=0$. That implies algebraic relations when applying map $B$ on components of the curvature. To see that, first we recall:
\begin{definition}
\label{framedef}
The tensors $P,Q: \wedge^2 TM \otimes T^*M$ are defined as:
\begin{align}
\label{equaP}
P_{ijk}&=\nabla_{i}\Rc_{jk}-\nabla_{j}\Rc_{ik}.\\
P(X\wedge Y,Z)&=(d_{\nabla}\Rc)(X,Y,Z)=\delta{\RR}(X,Y,Z), \nonumber \\
%P(\alpha, Z)&=\delta\RR(Z,\alpha);\nonumber\\
%&\sum_{p}\nabla_{\alpha(e_{p})}\Rc(Z,e_{p}).\nonumber \\
\label{equaQ}
Q_{ijk}&=g_{ki}\nabla_{j}\SS-g_{kj}\nabla_{i}\SS.
%P(X\wedge Y,Z)&=(d_{\nabla}\Rc)(X,Y,Z)=\delta{\RR}(Z,X,Y), \nonumber \\
\end{align}
\end{definition}

\begin{remark} Thus, harmonic curvature is equivalent to the condition  that $\Rc$ is a Codazzi tensor.  
%The tensors $P^{\pm},Q^{\pm}: \Lambda_2^{\pm}TM \otimes TM \rightarrow \mathbb{R}$ are defined by restricting $\alpha\in \Lambda_{2}^\pm TM$. They can be seen as operators on $\Lambda_{2}$ by standard projection.
It is also well-known that, for example see \cite{chowluni},
\[ \delta \WW=-\frac{n-3}{n-2}(P+\frac{1}{2(n-1)}Q).
\]
\end{remark}

\begin{lemma} 
\label{bianchimap}
We have the followings:
\begin{enumerate}
\item $B(Rc\circ g)= P\circ' g;$
\item $B(S g\circ g)=-Q\circ' g ;$
\item $B(\WW)=\frac{1}{n-3}(\delta \WW) \circ' g.$
\end{enumerate}
\end{lemma}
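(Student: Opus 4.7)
The plan is to prove (1) and (2) by direct expansion of the Kulkarni--Nomizu product and then derive (3) from the second Bianchi identity $B(\RR)=0$ combined with the curvature decomposition. The identity $\delta\WW=-\frac{n-3}{n-2}\bigl(P+\tfrac{1}{2(n-1)}Q\bigr)$ recorded in the preceding remark will be the final substitution in (3).

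For (1) I would write out
\[(\Rc\circ g)_{jkmn}=\Rc_{jm}g_{kn}+g_{jm}\Rc_{kn}-\Rc_{jn}g_{km}-g_{jn}\Rc_{km},\]
differentiate (using $\nabla g=0$), and then sum cyclically over $i,j,k$. The twelve resulting terms pair up into six Codazzi-type differences of the form $\nabla_a\Rc_{bc}-\nabla_b\Rc_{ac}=P_{abc}$. Using the antisymmetry $P_{abc}=-P_{bac}$ to reorder indices, one matches this against the six-term expansion of $(P\circ' g)_{ijkmn}$ read directly from the definition of $\circ'$. Part (2) is analogous but cleaner, since only $\nabla_i\SS$ survives after differentiation: expanding $(\SS\,g\circ g)_{jkmn}=2\SS(g_{jm}g_{kn}-g_{jn}g_{km})$ and taking the cyclic sum produces three groups of the form $\nabla_aS(g_{bm}g_{cn}-g_{bn}g_{cm})$. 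Plugging $Q_{ijk}=g_{ki}\nabla_j\SS-g_{kj}\nabla_i\SS$ into the $\circ' g$ definition gives precisely the negatives of these groups, accounting for the minus sign.

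For (3), I would apply $B$ to the decomposition
\[\RR=\WW-\frac{\SS\,g\circ g}{2(n-2)(n-1)}+\frac{\Rc\circ g}{n-2},\]
use $B(\RR)=0$, substitute (1) and (2), and factor out $\circ' g$ to obtain
\[B(\WW)=-\frac{1}{n-2}\Bigl(P+\tfrac{1}{2(n-1)}Q\Bigr)\circ' g.\]
Replacing the parenthesized tensor by $-\frac{n-2}{n-3}\delta\WW$ via the cited contracted Bianchi identity then yields $B(\WW)=\frac{1}{n-3}\delta\WW\circ' g$.

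The main obstacle is purely combinatorial bookkeeping in (1): twelve terms must be grouped into six copies of $P$ and then matched, via the antisymmetry of $P$, against the six-term expansion of $P\circ' g$. Once this is done, (2) is a shorter variant of the same procedure, and (3) is a mechanical substitution in which one only needs to track the coefficients $\tfrac{1}{n-2}$, $\tfrac{1}{2(n-1)(n-2)}$, and $\tfrac{n-3}{n-2}$ so that they assemble into the stated $\tfrac{1}{n-3}$.
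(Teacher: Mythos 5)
Your proposal is correct and takes essentially the same approach as the paper: direct expansion and regrouping of the Kulkarni--Nomizu terms into $P$ (resp.\ $Q$) for parts (1) and (2), and then part (3) by applying $B$ to the decomposition of $\RR$, using $B(\RR)=0$, and substituting $\delta\WW=-\frac{n-3}{n-2}\bigl(P+\tfrac{1}{2(n-1)}Q\bigr)$. The coefficient bookkeeping you outline indeed yields the stated $\tfrac{1}{n-3}$, so nothing further is needed.
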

\begin{proof} Since we consider equality of tensors, it suffices to show it for a normal orthonormal frame.
\begin{enumerate}
\item We compute: 
\begin{align*}
(\Rc\circ g)_{jkmn}&=\Rc_{jm}g_{kn}+\Rc_{kn}g_{jm}-\Rc_{jn}g_{km}-\Rc_{km}g_{jn},\\
\nabla_i (\Rc\circ g)_{jkmn}&=\nabla_i\Rc_{jm}g_{kn}+\nabla_i\Rc_{kn}g_{jm}-\nabla_i\Rc_{jn}g_{km}-\nabla_i\Rc_{km}g_{jn},\\
B(\Rc\circ g)_{ijkmn}&=\nabla_i\Rc_{jm}g_{kn}+\nabla_i\Rc_{kn}g_{jm}-\nabla_i\Rc_{jn}g_{km}-\nabla_i\Rc_{km}g_{jn},\\
&+\nabla_j\Rc_{km}g_{in}+\nabla_j\Rc_{in}g_{km}-\nabla_j\Rc_{kn}g_{im}-\nabla_j\Rc_{im}g_{kn}\\
&+\nabla_k\Rc_{im}g_{jn}+\nabla_k\Rc_{jn}g_{im}-\nabla_k\Rc_{in}g_{jm}-\nabla_k\Rc_{jm}g_{in}\\
&=g_{kn}P_{ijm}+g_{in}P_{jkm}+g_{jn}P_{kim}\\
&+g_{km}P_{jin}+g_{im}P_{kjn}+g_{jm}P_{ikn}. 
\end{align*}
\item We compute:
\begin{align*}
(\SS g\circ g)_{jkmn}&=2\SS g_{jm}g_{kn}-2\SS g_{jn}g_{km},\\
\nabla_i (\SS g\circ g)_{jkmn}&=2\nabla_i\SS (g_{jm}g_{kn}-g_{jn}g_{km}),\\
B(\SS g\circ g)_{ijkmn}&=2\nabla_i\SS (g_{jm}g_{kn}-g_{jn}g_{km}),\\
&+2\nabla_j\SS (g_{km}g_{in}-g_{kn}g_{im})\\
&+2\nabla_k\SS (g_{im}g_{jn}-g_{in}g_{jm})\\
&=g_{kn}Q_{jim}+g_{in}Q_{kjm}+g_{jn}Q_{ikm}\\
&+g_{km}P_{ijn}+g_{im}Q_{jkn}+g_{jm}Q_{kin}. 
\end{align*}
\item For the last statement, recall
\begin{align*}
B(\RR)&=0,\\
\RR &= \WW-\frac{S g\circ g}{2(n-2)(n-1)}+\frac{\Rc\circ g}{n-2},\\
(\delta \WW) &=-\frac{n-3}{n-2}(P+\frac{Q}{2(n-1)}).
\end{align*} 
\end{enumerate}
\end{proof}
\begin{remark} The key observation here is that harmonic Weyl curvature is equivalent to Weyl satisfying the second Bianchi identity.
\end{remark}
\begin{remark} %A coarse estimate is $|\nabla \WW|^2\geq \frac{1}{8}|\delta \WW|^2$. 
Using the decomposition above, we can show that 
\[\frac{1}{n-3}|\delta \WW|^2=|B(W)|^2\leq 3|\nabla \WW|^2.\]
%It is noticed that the constant here depends on our norm convention. 
\end{remark}
The inequality above is, however, generally not sharp so we'll expose the connection between gradient and divergence by integration by parts.  
\subsection{Integral formula}
First, observe that by (\ref{rcsimplify}),
\[rc(\WW^2+\WW^\sharp)(.,.)=\sum_{p,q}\WW(.,e_p,.,e_q)rc(\WW)(e_p,e_q)\equiv 0.\]
Therefore, $\WW^2+\WW^\sharp\in \mathfrak{W}(TM)$. 

Let $A\in S^2(TM)$ and $B=A\circ g\in \mathfrak{C}(TM)$. We have the following. 
%\begin{align*}
%A&=\frac{\Rc}{n-2}-\frac{\SS}{2(n-2)(n-1)}g;\\
%B&=\frac{\Rc\circ g}{n-2}-\frac{\SS}{2(n-2)(n-1)}g\circ g=A\circ g;\\
%\end{align*}
\begin{lemma}\label{productW}
We have, for a basis diagonalizing both $A$ and $g$ at a point then, 
\begin{align}
\label{pro1}
\left\langle{B, \WW^2+\WW^{\sharp}}\right\rangle &=0,\\
\left\langle{\WW^2, B}\right\rangle=-\left\langle{\WW, B\sharp \WW}\right\rangle &=\frac{1}{2}\sum_{i,j,p,q} A_{ii}\WW_{ijpq}^2,\\
\left\langle{\WW, \RR\sharp \WW}\right\rangle &= \frac{1}{2}\sum_{i,j,k,l, p, q}\WW_{ijkl}\WW_{jplq}\RR_{ipkq}. 
\end{align}
\end{lemma}
\begin{proof}
By (\ref{selfadjoint}), we have,  
\begin{align*}
\left\langle{B, \WW^2+\WW^{\sharp}}\right\rangle &=\left\langle{A, rc(\WW^2+\WW^{\sharp})}\right\rangle.
\end{align*}
Thus (\ref{pro1}) follows since $\WW^2+\WW^\sharp\in \mathfrak{W}(TM)$. 

Next, if we choose a basis diagonalizing both $A$ and $g$ then,
\begin{align*}
8\left\langle{B,\WW^2}\right\rangle &=\sum_{i,j,k,l} B_{ijkl}\sum_{p,q}\WW_{ijpq}\WW_{pqkl}\\
&=2\sum_{i,j,p,q} B_{ijij}\WW_{ijpq}\WW_{ijpq}=-8\left\langle{\WW, B\sharp \WW}\right\rangle.
\end{align*}
The last equality follows from the definition of $\sharp$ operator and symmetry of $\WW$ and $B$. Next, as $B=A\circ g$,
\begin{align*}
2\sum_{i,j,p,q} B_{ijij}\WW_{ijpq}\WW_{ijpq} &=2\sum_{i,j,p,q}(A_{ii}+A_{jj})\WW_{ijpq}\WW_{ijpq}\\
&=4\sum A_{ii}\WW_{ijpq}\WW_{ijpq}.
\end{align*}
Finally, by re-indexing, 
\begin{align*}
8\left\langle{\WW, \RR\sharp \WW}\right\rangle &= \sum_{i,j,k,l}\WW_{ijkl}\sum_{p,q}\Big[\RR_{ipkq}\WW_{jplq}+\WW_{ipkq}\RR_{jplq}\\
&-\RR_{iplq}\WW_{jpkq}-\RR_{jpkq}\WW_{iplq}\Big]\\
&=2\sum_{i,j,k,l,p,q}\WW_{ijkl}(\RR_{ipkq}\WW_{jplq}+\WW_{ipkq}\RR_{jplq});\\
&=4\sum_{i,j,k,l,p,q}\WW_{ijkl}\WW_{jplq}\RR_{ipkq}.
\end{align*}

\end{proof}
Inspired by Tachibana \cite{tachibana}, we compute the following.
\begin{lemma}
\label{tachilemma}
For a closed manifold $(M^n,g)$, $n\geq 4$, we have: 
\begin{equation*}
\left\langle{\WW^{\flat},\delta(\nabla \WW)-\nabla (\delta\WW)}\right\rangle =-\left\langle{\WW,\WW^2+\WW^\sharp}\right\rangle+\frac{1}{2}\left\langle{\Rc\circ g,\WW^2}\right\rangle.
\end{equation*}
Here, $\WW^\flat_{iklj}=\WW_{jikl}$.
\end{lemma}
\begin{proof}
We compute,
\begin{align*}
-4\left\langle{\WW^{\flat},\delta(\nabla \WW)-\nabla (\delta\WW)}\right\rangle &= \sum_{i,j,k,l}\WW_{ijkl}[(\delta(\nabla \WW))_{iklj}-(\nabla (\delta\WW))_{iklj}],\\
&=\sum_{i,j,k,l,m}\WW_{ijkl}(\nabla_m\nabla_i\WW_{jmkl}-\nabla_i\nabla_m\WW_{jmkl})
\end{align*}
Using (\ref{ricciidentity}), 
\begin{align*}
\nabla_{m}\nabla_i\WW_{jmkl}-\nabla_{i}\nabla_m\WW_{jmkl} &= \sum_s \RR_{mijs}\WW_{smkl}+\RR_{mims}\WW_{jskl}+\RR_{miks}\WW_{jmsl}+\RR_{mils}\WW_{jmks}.
\end{align*}
Using the Bianchi first identity and re-indexing, we have,
\begin{align*}
2\sum_{m,i,j,k,l,s}\WW_{ijkl}\RR_{mijs}\WW_{smkl}&=\sum_{m,i,j,k,l,s}\RR_{mijs}\WW_{ijkl}\WW_{smkl}+\RR_{mijs}\WW_{ijkl}\WW_{smkl}\\
&=\sum_{m,i,j,k,l,s}\RR_{mijs}\WW_{ijkl}\WW_{smkl}+\RR_{mjis}\WW_{jikl}\WW_{smkl}\\
&=\sum_{m,i,j,k,l,s}\RR_{mijs}\WW_{ijkl}\WW_{smkl}+\RR_{jmis}\WW_{ijkl}\WW_{smkl}\\
&=\sum_{m,i,j,k,l,s}(\RR_{mijs}+\RR_{jmis})\WW_{ijkl}\WW_{smkl}\\
&=-\sum_{m,i,j,k,l,s}\RR_{ijms}\WW_{ijkl}\WW_{smkl}\\
&=8\left\langle{\RR,\WW^2}\right\rangle.
\end{align*}
Next, if we choose a basis diagonalizing both $\Rc$ and $g$ then, by Lemma \ref{productW}, for $T=Rc\circ g$,
\begin{align*}
2\sum_{m,i,j,k,l,s}\WW_{ijkl}\RR_{mims}\WW_{jskl}&=2\sum_{i,j,k,l}\RR_{ii}\WW_{ijkl}\WW_{jikl}\\
&=-4\left\langle{T,\WW^2}\right\rangle.
\end{align*}
Next, also by Lemma \ref{productW},
\begin{align*}
2\sum_{m,i,j,k,l.s}\WW_{ijkl}(\RR_{miks}\WW_{jmsl}+\RR_{mils}\WW_{jmks})&=4\WW_{ijkl}\RR_{miks}\WW_{jmsl}\\
&=8\left\langle{\WW,\RR\sharp\WW}\right\rangle.
\end{align*}
Summing equations above yields,
\[-4\left\langle{\WW^{\flat},\delta(\nabla \WW)-\nabla (\delta\WW)}\right\rangle=4\left\langle{\RR,\WW^2}\right\rangle-2\left\langle{T,\WW^2}\right\rangle+4\left\langle{\WW,\RR\sharp\WW}\right\rangle.
\]
Recalling that
\[ \RR=\WW+\frac{\SS g\circ g }{2n(n-1)} + \frac{\EE \circ g}{n-2}:=\WW+B.\]

Since the curvature products are distributive, we obtain,
\begin{align*}
&\left\langle{\RR,\WW^2}\right\rangle+\left\langle{\WW,\RR\sharp \WW}\right\rangle \\
=&\left\langle{\WW,\WW^2}\right\rangle+\left\langle{B,\WW^2}\right\rangle+\left\langle{\WW,\WW\sharp \WW}\right\rangle+\left\langle{B,\WW^\sharp}\right\rangle \\
=& \left\langle{\WW,\WW^2+\WW^\sharp}\right\rangle.
\end{align*}

The last equality is due to Lemma \ref{productW}. So the result follows.
\end{proof}
Now, we are ready to prove Theorem \ref{main1}. The proof is inspired by some arguments from \cite[Proposition 3.3]{CGY03}.
%\begin{theorem}
%\label{main1copy}
%For a closed manifold $(M^n,g)$, $n\geq 4$, 
%\begin{equation*}
%\int_M |\nabla \WW|^2-\frac{(n-2)}{n-3}\int_M (\delta\WW)^2=2\int_M \left\langle{\WW,\WW^2+\WW^\sharp}\right\rangle -%\int_M\left\langle{\Rc\circ g, \WW^2}\right\rangle.
%\end{equation*}
%\end{theorem}
\begin{proof} %(Of Theorem \ref{main1})
We have,
\begin{align*}
4\int_M |\nabla \WW|^2 &= \int_M \sum_{m, i,j,k,l} \nabla_m \WW_{ijkl}(B(W)_{mijkl}-\nabla_i \WW_{jmkl}-\nabla_j \WW_{mikl}),\\
\int_M \sum_{m, i,j, k,l} \nabla_m \WW_{ijkl}B(W)_{mijkl} &=\int_M \sum_{m, i,j, k,l} \nabla_m \WW_{ijkl}\frac{1}{n-3}(\delta \WW\circ' g)_{mijkl}\\
&=\frac{1}{n-3}\int_M \sum_{i,j, k,l}\nabla_m\WW_{ijkm}(\delta{\WW})_{ijk}+\nabla_m\WW_{ijml}(\delta\WW)_{jil}\\
&=\frac{4}{n-3}\int_M |\delta\WW|^2,\\
-2\int_M\sum_{m,i,j, k,l} \nabla_m \WW_{ijkl}\nabla_i \WW_{jmkl}&=-2\int_M\sum_{m,i,j, k,l} \nabla_m \WW_{ijkl}\nabla_j \WW_{mikl}\\
&=2\int_M\sum_{m,i,j, k,l}\WW_{ijkl}\nabla_m\nabla_i\WW_{jmkl}\\
&= -8\int_M \left\langle{\WW^{\flat}, \delta(\nabla \WW)-\nabla(\delta\WW)}\right\rangle+2\int_M\sum_{m,i,j, k,l}\WW_{ijkl}\nabla_{i}\nabla_m\WW_{jmkl};
\end{align*}
Using integration by parts yields, 
\begin{align*}
2\int_M\sum_{m,i,j, k,l}\WW_{ijkl}\nabla_{i}\nabla_m\WW_{jmkl} &=-2\int_M\sum_{m,i,j, k,l}\nabla_{i}\WW_{ijkl}\nabla_m\WW_{jmkl}=4\int_M |\delta\WW|^2.
\end{align*}
Summing equations above and recalling Lemma \ref{tachilemma} yield, 
\begin{align*}
4\int_M |\nabla \WW|^2 &=\frac{4(n-2)}{n-3}\int_M |\delta\WW|^2\\
&+8\int_M \left\langle{\WW,\WW^2+\WW^\sharp}\right\rangle-4\int_M \left\langle{\Rc\circ g,\WW^2}\right\rangle\nonumber
\end{align*}

Then the theorem follows.
\end{proof}

\subsection{Harmonic Weyl Curvature}
In this section, using the method similar to above, we obtain identities for a manifold with harmonic Weyl curvature. First, we show the following identity for the Ricci tensor.
\begin{proposition}
\label{ricci}
For a closed manifold $(M^n,g)$ with harmonic Weyl tensor, $n\geq 4$, 
\begin{equation}
\int_M |\nabla \Rc|^2-\frac{n}{4(n-1)}\int_M |\nabla \SS|^2=\int_M \WW(\EE,\EE)-\frac{n}{n-2}\EE^3-\frac{1}{n-1}\SS|\EE|^2.
\end{equation}
Here, for an orthonormal frame, $\WW(\EE,\EE)=\sum_{i,j,k,l}\WW_{ijkl}\EE_{ik}\EE_{jl}$, $\EE^3=\sum_{i,j,k}\EE_{ij}\EE_{jk}\EE_{ki}$.
\end{proposition}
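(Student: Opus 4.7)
The plan is to adapt the integration-by-parts strategy of Theorem \ref{main1} directly to $\nabla \Rc$. By Lemma \ref{bianchimap} together with the formula $\delta \WW = -\frac{n-3}{n-2}(P+\frac{Q}{2(n-1)})$, harmonic Weyl curvature is equivalent to the Codazzi-type relation
\[P_{ijk} = \nabla_i \Rc_{jk} - \nabla_j \Rc_{ik} = \frac{1}{2(n-1)}\bigl(g_{jk}\nabla_i \SS - g_{ik}\nabla_j \SS\bigr).\]
The first step is to split pointwise
\[|\nabla\Rc|^2 = \sum_{i,j,k} \nabla_i \Rc_{jk}\,\nabla_j \Rc_{ik} + \sum_{i,j,k} \nabla_i \Rc_{jk}\, P_{ijk}.\]
Using the above Codazzi relation and the contracted second Bianchi identity $\sum_i \nabla_i \Rc_{ij} = \frac{1}{2}\nabla_j \SS$, the second sum collapses to $\frac{1}{4(n-1)}|\nabla \SS|^2$.

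For the remaining sum I would integrate by parts in $\nabla_i$ and then commute derivatives by the Ricci identity (\ref{ricciidentity}):
\[\int_M \sum \nabla_i \Rc_{jk}\,\nabla_j\Rc_{ik} = -\int_M \sum \Rc_{jk}\,\nabla_j\nabla_i\Rc_{ik} - \int_M \sum \Rc_{jk}\,[\nabla_i, \nabla_j]\Rc_{ik}.\]
The first term on the right, via $\nabla_i\Rc_{ik}=\frac{1}{2}\nabla_k\SS$ and a further integration by parts, becomes $\frac{1}{4}\int|\nabla\SS|^2$; combined with the $\frac{1}{4(n-1)}|\nabla\SS|^2$ piece this produces exactly the coefficient $\frac{n}{4(n-1)}=\frac{1}{4(n-1)}+\frac{1}{4}$ in front of $\int|\nabla\SS|^2$.

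The commutator $[\nabla_i,\nabla_j]\Rc_{ik}$ yields, via (\ref{ricciidentity}), $\sum_s(\RR_{ijis}\Rc_{sk}+\RR_{ijks}\Rc_{is}) = \Rc_{js}\Rc_{sk}+\sum_s\RR_{ijks}\Rc_{is}$, so that contracting against $\Rc_{jk}$ gives $\mathrm{tr}(\Rc^3)+\sum\Rc_{jk}\Rc_{is}\RR_{ijks}$. I would then substitute $\RR = \WW + \frac{\EE\circ g}{n-2} + \frac{\SS\, g\circ g}{2n(n-1)}$ and $\Rc = \EE + \frac{\SS}{n}g$ and carefully track the cubic contractions that arise. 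The main technical obstacle is precisely this bookkeeping: using the first Bianchi identity together with the antisymmetry of $\WW$ in its first two slots and the symmetry of $\EE$, one verifies that $\sum \Rc_{jk}\Rc_{is}\WW_{ijks} = -\WW(\EE,\EE)$ (the $\frac{\SS}{n}g$ part of $\Rc$ drops out because $\WW$ is trace-free). The remaining scalar and Einstein-tensor contributions telescope, with all $\SS^3$ terms cancelling between $-\mathrm{tr}(\Rc^3)$ and the $\frac{\SS\, g\circ g}{2n(n-1)}$ component, leaving only the claimed $-\frac{n}{n-2}\EE^3$ and $-\frac{1}{n-1}\SS|\EE|^2$ contributions. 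Summing everything and rearranging gives the stated identity.
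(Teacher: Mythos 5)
Your proposal is correct and follows essentially the same route as the paper: the same splitting $\nabla_i\Rc_{jk}=P_{ijk}+\nabla_j\Rc_{ik}$ with harmonic Weyl giving the Codazzi-type relation for $P$, the same integration by parts plus Ricci-identity commutation producing the $\frac{1}{4}+\frac{1}{4(n-1)}=\frac{n}{4(n-1)}$ coefficient, and the same decomposition of the cubic terms (your sign bookkeeping $\sum\Rc_{jk}\Rc_{is}\WW_{ijks}=-\WW(\EE,\EE)$ and the $\SS^3$ cancellation agree with the paper's ``tedious computation''). No substantive differences or gaps.
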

\begin{proof}
First, we recall, by the contracted second Bianchi identity,
\[ \delta\Rc=\frac{1}{2}\nabla \SS.\]
Also, since $\delta \WW=0$,
\[-P=\frac{1}{2(n-1)}Q.
\]
We have,
\begin{align*}
\int_M |\nabla \Rc|^2 &= \int_M \sum_{i,j,k} \nabla_i \RR_{jk}(P_{ijk}+\nabla_j \RR_{ik}),\\
\int_M \sum_{i,j, k} \nabla_i \RR_{jk}P_{ijk} &=-\frac{1}{2(n-1)}\int_M \sum_{i,j, k} \nabla_i \RR_{jk}Q_{ijk}\\
&=-\frac{1}{2(n-1)}\int_M \sum_{i,j, k}\nabla_i\RR_{jk}(g_{ki}\nabla_j \SS-g_{kj}\nabla_i \SS)\\
&=\frac{1}{4(n-1)}\int_M |\nabla\SS|^2;\\
\int_M\sum_{i,j, k} \nabla_i \RR_{jk}\nabla_j \RR_{ik}&=-\int_M\sum_{i,j, k} \RR_{jk}\nabla_i\nabla_j \RR_{ik}.
\end{align*}
Using (\ref{ricciidentity}), 
\begin{align*}
\nabla_{i}\nabla_j\RR_{ik}-\nabla_{j}\nabla_i\RR_{ik} &= \sum_s \RR_{ijis}\RR_{sk}+\RR_{ijks}\RR_{is};\\
-\int_M\sum_{i,j,k}\RR_{jk}\nabla_{j}\nabla_i\RR_{ik} &=\int_M\sum_{i,j, k}\nabla_{j}\RR_{jk}\nabla_i\RR_{ik}=\frac{1}{4}\int_M |\nabla\SS|^2.
\end{align*}
For the zero order terms we compute, using the Bianchi first identity and re-indexing,
\begin{align*}
-\sum_{i,j,k,s}\RR_{jk}\RR_{ijis}\RR_{sk}&=-\sum_{s,j,k}\RR_{jk}\RR_{js}\RR_{sk};\\
-\sum_{i,j,k,s}\RR_{jk}\RR_{ijks}\RR_{is}&=\sum_{i,j,k,s}\RR_{ijks}\RR_{ik}\RR_{js}.\end{align*}
By a tedious computation, %see also \cite{catino15integral},
\begin{align*}
\Rc^3\doteqdot \sum_{s,j,k}\RR_{jk}\RR_{js}\RR_{sk} &=\EE^3+\frac{3}{n}\SS|\Rc|^2-\frac{2\SS^3}{n^2}\\
&=\EE^3+\frac{3}{n}\SS|\EE|^2+\frac{\SS^3}{n^2};\\
\RR(\Rc,\Rc)\doteqdot \sum_{i,j,k,s}\RR_{ijks}\RR_{ik}\RR_{js}&=\WW(\Rc,\Rc)-\frac{2}{n-2}\EE^3+\frac{\SS^3}{n^2}+\frac{2n-3}{n(n-1)}\SS|\EE|^2.
\end{align*}
%Check with Huisken/Fu and redo computation. Yes.
\end{proof}
\begin{remark}
Huisken \cite[Lemma 4.3]{Huisken} shows that for any Riemannian manifold, $|\nabla\Rc|^2\geq \frac{3n-2}{2(n-1)(n+2)}|\nabla \SS|^2$. 
\end{remark}
Furthermore, we have a point-wise identity for Weyl tensor. This is a generalization of one in dimension four, see \cite[16.73]{besse}.
\begin{theorem}
\label{pointwiseBW}
For a closed manifold $(M^n,g)$, $n\geq 4$, with harmonic Weyl curvature, we have: 
\begin{equation}
\label{pointwiseW}
\Delta |\WW|^2=2|\nabla \WW|^2-4\left\langle{\WW,\WW^2+\WW^\sharp}\right\rangle +2\left\langle{\Rc\circ g, \WW^2}\right\rangle.
\end{equation}
\end{theorem}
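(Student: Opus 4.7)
The plan is to establish this pointwise identity by the same computation used to prove Theorem \ref{main1}, but replacing every integration by parts with direct pointwise use of the harmonic Weyl assumption. The starting point is the standard Bochner formula
\[
\tfrac{1}{2}\Delta|\WW|^2 \;=\; |\nabla\WW|^2 + \langle \WW,\Delta\WW\rangle,
\]
where $\Delta=\sum_m\nabla_m\nabla_m$ is the rough Laplacian. The task reduces to computing $\langle\WW,\Delta\WW\rangle$ pointwise.

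First I would invoke Lemma \ref{bianchimap}(iii): since $\delta\WW\equiv 0$, we have $B(\WW)\equiv 0$, i.e.\ the second-Bianchi-type identity holds for $\WW$ itself:
\[
\nabla_m W_{ijkl} + \nabla_i W_{jmkl} + \nabla_j W_{mikl} \;=\; 0.
\]
Applying $\nabla_m$ and tracing over $m$ yields
\[
\Delta W_{ijkl} \;=\; -\nabla_m\nabla_i W_{jmkl} - \nabla_m\nabla_j W_{mikl}.
\]
Next I would commute covariant derivatives with the Ricci identity (\ref{ricciidentity}). In each commutator the remaining divergence term $\nabla_i(\delta\WW)_{\cdot}$ vanishes, so we are left with a sum of eight curvature-times-Weyl terms (four per commutator) — exactly the four types already encountered in the proof of Theorem \ref{main1}.

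Contracting with $W_{ijkl}$ and applying the algebraic manipulations from the proof of Theorem \ref{main1} (first Bianchi identity, antisymmetries, relabeling), these terms collapse, in a normal frame diagonalizing $\Rc$, to
\[
\langle\WW,\Delta\WW\rangle \;=\; -2\langle\RR,\WW^2\rangle - 2\langle\WW,\RR\sharp\WW\rangle + \langle\Rc\circ g,\WW^2\rangle.
\]
Finally, writing $\RR = \WW + B$ with $B=\tfrac{\SS\, g\circ g}{2n(n-1)}+\tfrac{\EE\circ g}{n-2}$, the identity (\ref{supmain2}) — a consequence of Lemma \ref{productW} — simplifies the first two terms to $-2\langle\WW,\WW^2+\WW^\sharp\rangle$. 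Substituting back into the Bochner formula and multiplying by $2$ gives (\ref{pointwiseW}).

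The main obstacle is purely bookkeeping: one must verify that the eight commutator terms, after contraction with $\WW$, reorganize via the first Bianchi identity into the structures $\langle\RR,\WW^2\rangle$, $\langle\WW,\RR\sharp\WW\rangle$, and $\langle\Rc\circ g,\WW^2\rangle$ with the correct coefficients. This is precisely the algebraic content already isolated in the zero-order computation within the proof of Theorem \ref{main1}; the harmonic Weyl hypothesis is what lets us run it pointwise rather than under an integral, with the $(\delta\WW)^2$ contributions simply dropping out.
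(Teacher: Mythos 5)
Your proposal is correct and follows essentially the same route as the paper's own proof: the Bochner formula combined with $B(\WW)\equiv 0$ from Lemma \ref{bianchimap}, the Ricci identity with the $\nabla(\delta\WW)$ term vanishing pointwise, and the zero-order algebra from Theorem \ref{main1} together with Lemma \ref{productW} to collapse $\left\langle{\RR,\WW^2}\right\rangle+\left\langle{\WW,\RR\sharp\WW}\right\rangle$ into $\left\langle{\WW,\WW^2+\WW^\sharp}\right\rangle$. The coefficients you state are consistent with the paper's norm conventions, so no changes are needed.
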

\begin{proof} %(Of Theorem \ref{main1})
We have, by Lemma \ref{bianchimap} and harmonic Weyl curvature, $B(\WW)\equiv 0$. Then,
\begin{align*}
2\Delta |\WW|^2&= 4|\nabla \WW|^2+\sum_{m,i,j,k,l}\WW_{ijkl}\nabla_m\nabla_m \WW_{ijkl};\\
\sum_{m,i,j,k,l}\WW_{ijkl}\nabla_m \nabla_m\WW_{ijkl} &=\sum_{m, i,j,k,l}  \WW_{ijkl}\nabla_m(-\nabla_i \WW_{jmkl}-\nabla_j \WW_{mikl}),\\
&=-2\sum_{m,i,j, k,l} \WW_{ijkl}\nabla_m\nabla_i \WW_{jmkl},\\
&= 8\left\langle{\WW^b, \delta(\nabla \WW)-\nabla(\delta\WW)}\right\rangle-2\sum_{m,i,j,k,l}\WW_{ijkl}\nabla_i(\delta \WW)_{klj},\\
&=8\left\langle{\WW^b, \delta(\nabla \WW)-\nabla(\delta\WW)}\right\rangle\\
&= -8\left\langle{\WW,\WW^2+\WW^\sharp}\right\rangle+4\left\langle{\Rc\circ g,\WW^2}\right\rangle.
\end{align*}
The last equality follows from Lemma \ref{tachilemma}. 

\end{proof}
%%%%%%%%%%%%%%%%%%%%%%%%%%%%%%%%
\section{\textbf{Rigidity Theorems}}
\label{gap}
%%%%%%%%%%%%%%%%%%%%%%%%%%%%%%%%%%%%%%%%%%%%%%
In this section, we derive applications of new Bochner-Weitzenb\"ock-Lichnerowicz type formulas. Precisely, results include point-wise and integral rigidity theorems.  
\subsection{Estimates}
First, we'll investigate the quantity, $\left\langle{\WW, \WW^2+\WW^\sharp}\right\rangle$. Recall that, by re-indexing,
\begin{align*}
8\left\langle{\WW, \WW^\sharp}\right\rangle &= 2\sum_{i,j,k,l,p,q}\WW_{ijkl}(\WW_{ipkq}\WW_{jplq}-\WW_{iplq}\WW_{jpkq}),\\
&= 4\sum_{i,j,k,l,p,q}\WW_{ijkl}\WW_{ipkq}\WW_{jplq};\\
8\left\langle{\WW, \WW^2}\right\rangle &=\sum_{i,j,k,l,p,q}\WW_{ijkl}\WW_{ijpq}\WW_{klpq}.
\end{align*}

When $4\leq n\leq 5$, due to \cite[A1]{jp87}, (be aware of a typo in (A1))
\begin{equation}\label{sharpcubic}
\left\langle{\WW, \WW^\sharp}\right\rangle=2\left\langle{\WW,\WW^2}\right\rangle\doteqdot 2\WW^3.
\end{equation}
Thus,
\begin{equation}
\label{five}
\left\langle{\WW,\WW^2+\WW^\sharp}\right\rangle= 3\WW^3. \end{equation}
\begin{remark} Subsection \ref{pure} compares $\left\langle{\WW, \WW^\sharp}\right\rangle$ with $\left\langle{\WW,\WW^2}\right\rangle$ in any dimension when the curvature is pure. 
\end{remark}
For a general dimension, we follow the argument in \cite{Huisken,tachibana} and define, for fixed m, n, p, q, a local skew symmetric tensor,
\begin{align*}
u_{ij}^{(mnpq)} &= \WW_{inpq}g_{jm}+\WW_{mipq}g_{jn}+\WW_{mniq}g_{jp}+\WW_{mnpi}g_{jq}\\
&-\WW_{jnpq}g_{im}-\WW_{mjpq}g_{in}-\WW_{mnjq}g_{ip}-\WW_{mnpj}g_{iq}.
\end{align*}
It follows that,
\begin{align*}
8|u|^2=\sum_{m,n,p,q}\left\langle{u_{ij}^{(mnpq)},u_{ij}^{(mnpq)}}\right\rangle &= 32(n-1)|\WW|^2,\\
8\left\langle{\WW,\WW^2+\WW^\sharp}\right\rangle &=\frac{1}{8}\left\langle{W_{ijkl} u_{ij}^{(mnpq)},u_{kl}^{(mnpq)}}\right\rangle.
\end{align*}
If $\omega$ is the largest absolute value of any eigenvalue of $\WW$, then by Berger's estimate \cite{berger60b},
\[\WW_{ijkl}\leq \frac{4}{3}\omega.\]
It follows that,
\begin{equation}
\label{six}
\left\langle{\WW,\WW^2+\WW^\sharp}\right\rangle =\frac{1}{64}\left\langle{W_{ijkl} u_{ij}^{(mnpq)},u_{kl}^{(mnpq)}}\right\rangle \leq \frac{4}{3}\frac{1}{64}\omega 8|u|^2= \frac{2}{3}(n-1)|\WW|^2 \omega.
\end{equation}
Thus, the computation above proves the following result. 
\begin{lemma}
\label{atleast5}
Let $\omega$ be the largest absolute value of any eigenvalue of $\WW$ then, for $n\geq 5$,
\begin{equation}
\left\langle{\WW,\WW^2+\WW^\sharp}\right\rangle\leq \frac{2(n-1)}{3} \omega|\WW|^2.
\end{equation}
%Here, we find $c(5)=\frac{24}{7}$, and $c(n)=\frac{2(n-1)}{3}$ for $n\geq 6$. 
\end{lemma}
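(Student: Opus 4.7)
The plan is to follow the Huisken--Tachibana approach, already outlined in the paragraphs preceding the lemma, by viewing the cubic invariant $\langle\WW,\WW^2+\WW^\sharp\rangle$ as a weighted bilinear form on an auxiliary family of $2$-forms built from $\WW$. Concretely, for each fixed quadruple $(m,n,p,q)$, I would introduce the local $2$-form $u^{(mnpq)}_{ij}$ written in the excerpt as a Bianchi-type symmetrization of $\WW$ against the metric; antisymmetry in $(i,j)$ is built into the construction, and the eight ``legs'' of $u$ mirror the eight nonzero index placements compatible with the block and pair symmetries of $\WW$.

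The first computation is the norm identity $\sum_{m,n,p,q}|u^{(mnpq)}|^2=32(n-1)|\WW|^2$. I would expand the $8\times 8$ pairwise products term by term: the ``diagonal'' contributions such as $(\WW_{inpq}g_{jm})^2$ each collapse to a multiple of $|\WW|^2$ after taking the $g$-traces (a factor of $n$ arises from $\sum_{jm}g_{jm}^2$), while the truly mixed terms either reinforce each other by the pair-symmetry $\WW_{ijkl}=\WW_{klij}$, cancel via the skew-symmetry in $(i,j)$, or vanish via the traceless condition $rc(\WW)\equiv 0$. Bookkeeping the signs and contraction patterns should produce the stated coefficient $32(n-1)$, with the $(n-1)$ reflecting one trace contribution offset by one traceless cancellation.

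The second computation is the bilinear identity $\sum_{m,n,p,q}\WW_{ijkl}\,u^{(mnpq)}_{ij}u^{(mnpq)}_{kl}=64\,\langle\WW,\WW^2+\WW^\sharp\rangle$. Each of the resulting $64$ cubic monomials in $\WW$ carries two pairs of index contractions against $\WW_{ijkl}$; matching the two-on-two contractions against the definition (\ref{Rsquared}) of $\WW^2$ and the one-on-three (skew-pair-crossing) contractions against (\ref{Rsharp}) of $\WW^\sharp$ reassembles precisely $64\langle\WW,\WW^2+\WW^\sharp\rangle$ after absorbing the handful of monomials that vanish by tracelessness.

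With these two identities in hand, the estimate closes immediately. By Berger's pointwise bound \cite{berger60b}, since $\omega$ is the largest eigenvalue magnitude of $\WW$ as an operator on $\wedge^2$, one has $\WW_{ijkl}\le \tfrac{4}{3}\omega$ componentwise, and so
\[
\langle\WW,\WW^2+\WW^\sharp\rangle \;=\; \frac{1}{64}\sum_{m,n,p,q}\WW_{ijkl}\,u^{(mnpq)}_{ij}u^{(mnpq)}_{kl} \;\le\; \frac{1}{64}\cdot\frac{4\omega}{3}\cdot 32(n-1)|\WW|^2 \;=\; \frac{2(n-1)}{3}\,\omega\,|\WW|^2.
\]
The main obstacle is purely the index bookkeeping in the two algebraic identities above; the Berger step itself is free once those are established. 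A subtle point worth checking is that Berger is being applied componentwise, so I must ensure the quadratic form $\sum_{m,n,p,q}u^{(mnpq)}_{ij}u^{(mnpq)}_{kl}$ in the pairs $(ij),(kl)$ is nonnegative definite, which is immediate from its sum-of-squares presentation.
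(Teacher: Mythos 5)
Your proposal is essentially the paper's own proof: it uses the same auxiliary skew tensors $u^{(mnpq)}_{ij}$, the same two identities $\sum_{m,n,p,q}|u^{(mnpq)}|^2=32(n-1)|\WW|^2$ and $\sum_{m,n,p,q}\WW_{ijkl}u^{(mnpq)}_{ij}u^{(mnpq)}_{kl}=64\left\langle{\WW,\WW^2+\WW^\sharp}\right\rangle$, and the same concluding application of Berger's bound $\WW_{ijkl}\leq\frac{4}{3}\omega$ with identical arithmetic, so it is correct and takes the same route. The only difference is your closing caveat about how Berger is applied, which is treated with the same brevity in the paper itself.
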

%\begin{proof}
%For $n\geq 6$, it follows from (\ref{six}).  
%\end{proof}
\begin{remark} \label{fivecase} For $n=5$, the result holds with the same constant for largest eigenvalue of $\WW$ due to (\ref{five}) and Lemma \ref{Wcubic}. 
\end{remark}

We also have an estimate just in terms of norm. First, we have the following result, for a proof see \cite[Lemma 2.4]{Huisken}.
\begin{lemma} \label{eigenestimate}
Let $T$ be a symmetric trace-free operator on an $m$-dimensional vector space $V$. If $\lambda$ is an eigenvalue, then,
\[\lambda^2 \leq \frac{m-1}{m} |T|^2.\]
\end{lemma}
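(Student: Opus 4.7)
The plan is to diagonalize $T$ and reduce the inequality to a Cauchy-Schwarz estimate on the eigenvalues. Since $T$ is symmetric, pick an orthonormal eigenbasis and let $\lambda_1, \ldots, \lambda_m$ denote the (real) eigenvalues, with the distinguished eigenvalue relabeled as $\lambda = \lambda_1$. In this basis we have the two identities
\begin{align*}
\lambda_1 + \lambda_2 + \cdots + \lambda_m &= 0,\\
\lambda_1^2 + \lambda_2^2 + \cdots + \lambda_m^2 &= |T|^2,
\end{align*}
the first from the trace-free hypothesis and the second from the definition of the Frobenius norm.

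Next I would isolate $\lambda_1$ using the trace-free identity: $-\lambda_1 = \lambda_2 + \cdots + \lambda_m$. Squaring and applying Cauchy-Schwarz to the $(m-1)$-term sum yields
\[
\lambda_1^2 \;=\; \Bigl(\sum_{i=2}^m \lambda_i\Bigr)^2 \;\leq\; (m-1)\sum_{i=2}^m \lambda_i^2.
\]
Substituting $\sum_{i=2}^m \lambda_i^2 = |T|^2 - \lambda_1^2$ and rearranging gives $\lambda_1^2 \leq \frac{m-1}{m}|T|^2$, which is the desired estimate.

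There is no genuine obstacle here; this is a short linear-algebra observation, with equality characterized by the case in which $T$ has a single simple eigenvalue $\lambda$ and an $(m-1)$-fold eigenvalue equal to $-\lambda/(m-1)$. The role of this lemma in what follows is presumably to bound an individual eigenvalue of $\WW$ (or of a block of it) in terms of $|\WW|$, so as to combine with Lemma \ref{atleast5} and recover norm-only versions of the pinching estimates in Theorem \ref{nonnegative} and Theorem \ref{mainintegral}.
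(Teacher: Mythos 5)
Your proof is correct and is the standard argument: the paper itself does not prove this lemma but simply cites Huisken's Lemma 2.4, whose proof is exactly the diagonalize-and-Cauchy--Schwarz computation you give ($-\lambda_1=\sum_{i\ge 2}\lambda_i$, square, use $\sum_{i\ge 2}\lambda_i^2=|T|^2-\lambda_1^2$, rearrange). Your equality characterization and your reading of how the lemma feeds into the norm-pinching results (Lemma \ref{atleast5norm}, Theorem \ref{normthm}) are also accurate, so nothing is missing.
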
 
Then the following holds.
\begin{lemma}
\label{atleast5norm}
For $n\geq 5$,
\begin{equation}
\left\langle{\WW,\WW^2+\WW^\sharp}\right\rangle\leq c(n)|\WW|^3.
\end{equation}
Here, we find $c(5)=\frac{8}{\sqrt{10}}$ and $c(n)=5$ for $n\geq 6$. 
\end{lemma}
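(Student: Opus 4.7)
My plan is to convert the eigenvalue-based estimate of Lemma \ref{atleast5} into a bound purely in terms of $|\WW|$ by controlling the largest eigenvalue magnitude $\omega$ of $\hat\WW\colon\Lambda^2 V\to\Lambda^2 V$ via its Hilbert--Schmidt norm.

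The crucial structural fact is that $\hat\WW$ is a symmetric trace-free operator on $\Lambda^2 V$: it is symmetric by construction, and its trace equals $\tfrac12\sum_i rc(\WW)_{ii}$, which vanishes because the Ricci contraction of the Weyl tensor is identically zero by the definition of $\mathfrak{W}V$. With $m=\dim\Lambda^2 V=n(n-1)/2$, Lemma \ref{eigenestimate} therefore yields
\[\omega^2\leq\frac{m-1}{m}|\WW|^2=\frac{(n+1)(n-2)}{n(n-1)}|\WW|^2.\]
Substituting into the inequality from Lemma \ref{atleast5} produces
\[\langle\WW,\WW^2+\WW^\sharp\rangle\leq\frac{2(n-1)}{3}\sqrt{\frac{(n+1)(n-2)}{n(n-1)}}\,|\WW|^3=\frac{2}{3}\sqrt{\frac{(n-1)(n+1)(n-2)}{n}}\,|\WW|^3.\]

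For $n=5$ one has $m=10$, so the displayed constant becomes $\tfrac{8}{3}\cdot\tfrac{3}{\sqrt{10}}=\tfrac{8}{\sqrt{10}}$, recovering $c(5)$ exactly. For $n\geq 6$ the uniform bound $c(n)=5$ follows by an elementary numerical comparison of $\frac{2}{3}\sqrt{(n-1)(n+1)(n-2)/n}$ against $5$. If the crude eigenvalue estimate proves insufficient for very large $n$, one would have to exploit extra constraints on $\hat\WW$ beyond mere trace-freeness; the most natural candidate is the first Bianchi identity $b(\WW)=0$, which restricts $\hat\WW$ to a proper subspace of the traceless symmetric operators on $\Lambda^2 V$ and sharpens the eigenvalue bound of Lemma \ref{eigenestimate}.

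The main potential obstacle is precisely this last sharpening: Lemma \ref{eigenestimate} treats $\hat\WW$ as a generic traceless symmetric operator, ignoring the algebraic constraints inherited from $\WW\in\mathfrak{W}V$. Quantifying how those constraints improve the dimensional factor $\sqrt{(m-1)/m}$ uniformly enough to yield the clean constant $5$ for all $n\geq 6$ is the only nonroutine piece; apart from it, the proof is a direct concatenation of Lemmas \ref{atleast5} and \ref{eigenestimate}.
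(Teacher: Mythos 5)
Your $n=5$ argument is fine and lands exactly where the paper does: $\hat\WW$ is symmetric and trace-free on the $10$-dimensional space $\Lambda^2$, Lemma \ref{eigenestimate} gives $\omega\leq\sqrt{9/10}\,|\WW|$, and combined with the eigenvalue bound this yields $\tfrac{8}{\sqrt{10}}|\WW|^3$ (the paper gets the same constant via (\ref{five}) together with Lemmas \ref{Wcubic} and \ref{eigenestimate}). The problem is the case $n\geq 6$. Your concatenation of Lemmas \ref{atleast5} and \ref{eigenestimate} produces the constant
\[
\frac{2(n-1)}{3}\sqrt{\frac{(n+1)(n-2)}{n(n-1)}}=\frac{2}{3}\sqrt{\frac{(n-1)(n+1)(n-2)}{n}},
\]
which grows like $\tfrac{2n}{3}$; it is below $5$ only for $6\leq n\leq 8$ and already exceeds $5$ at $n=9$ (where it is about $5.26$). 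So the ``elementary numerical comparison'' you invoke is false, and the main claim $c(n)=5$ for all $n\geq 6$ is not established. The fallback you sketch --- sharpening Lemma \ref{eigenestimate} using $b(\WW)=0$ --- is not carried out, and it cannot be a routine patch: to extract a dimension-free constant from the factor $\tfrac{2(n-1)}{3}\omega$ you would need $\omega\lesssim |\WW|/n$, which genuine Weyl-type tensors do not satisfy (already in dimension four, $\WW^+$ of a K\"ahler surface has spectrum $(\tfrac{\SS}{6},-\tfrac{\SS}{12},-\tfrac{\SS}{12})$, so $\omega$ is comparable to $|\WW|$). In other words, any route that passes through the largest eigenvalue is structurally too lossy for large $n$.

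The paper avoids eigenvalues entirely for $n\geq 6$: it writes
\[
\left\langle{\WW,\WW^2+\WW^\sharp}\right\rangle=\frac{1}{8}\Big(4\sum_{i,j,k,l,p,q}\WW_{ijkl}\WW_{ipkq}\WW_{jplq}+\sum_{i,j,k,l,p,q}\WW_{ijkl}\WW_{ijpq}\WW_{klpq}\Big)
\]
and bounds each six-index contraction by Cauchy--Schwarz against the full tensor norm $\big(\sum\WW_{ijkl}^2\big)^{3/2}=8|\WW|^3$, giving $\tfrac{1}{8}(4\cdot 8+8)|\WW|^3=5|\WW|^3$, a constant independent of $n$. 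To repair your proof you should replace the eigenvalue step for $n\geq 6$ by this direct term-by-term estimate (or an equivalent dimension-free bound on the cubic contractions).
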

\begin{proof}
For any dimension, by Cauchy-Schwartz inequality, 
\begin{align*}
\left\langle{\WW,\WW^2+\WW^\sharp}\right\rangle &=\frac{1}{8}\sum_{i,j,k,l,p,q}(4\WW_{ijkl}\WW_{ipkq}\WW_{jplq}+\WW_{ijkl}\WW_{ijpq}\WW_{klpq}),\\
&\leq 5|\WW|^3. 
\end{align*}
In dimension five, $\WW$ is an operator on a $10$-dimensional vector space. By Lemma \ref{Wcubic} and equation (\ref{eigenestimate}), 
\[\left\langle{\WW,\WW^2+\WW^\sharp}\right\rangle=3\WW^3\leq 3\sqrt{\frac{9}{10}}\frac{8}{9}|\WW|^3=\frac{8}{\sqrt{10}}|\WW|^3.\]
%In dimension six, \cite[Lemma 2.3]{catino15integral} shows that,
%\[\left\langle{\WW,\WW^2+\WW^\sharp}\right\rangle \leq \sqrt{70/3}|\WW|^3.\]
\end{proof}
\begin{remark} For $n>5$, $c(n)$ might be improved slightly, see \cite[Lemma 2.3]{catino15integral}. %But that is not our focus here.
\end{remark}

\subsection{Point-wise}
In this subsection, we'll show various results under point-wise assumptions. 
First, we prove Theorem \ref{nonnegative}.

\begin{proof}
By the work of Tachibana \cite{tachibana} (see also \cite[Lemma 7.33]{chowluni}), non-negative curvature operator implies that,
\begin{align*}
\left\langle{\WW^{\flat},\delta(\nabla \WW)-\nabla (\delta\WW)}\right\rangle &\geq 0.
\end{align*}
Therefore, by Theorem \ref{main1} and harmonicity of Weyl, $\nabla\WW\equiv 0$. Then due to Derdzinski and Roter \cite[Theorem 2]{dr77}, the metric must be either locally conformally flat or locally symmetric. 
\end{proof}
The next result is concerned with bounding eigenvalues by the scalar curvature.

\begin{proposition} 
\label{p1} Let $(M^n,g)$, $n\geq 5$, be a closed Riemannian manifold. Let $\omega$ be the largest absolute value of any eigenvalue of $\WW$ and $-\ell$ the smallest eigenvalue of $\EE$. Suppose at each point,
$$\frac{2(n-1)}{3}\omega+\ell\leq \frac{\SS}{n},$$ then the following holds,
\[\int_M |\nabla \WW|^2\leq \frac{(n-2)}{n-3}\int_M (\delta\WW)^2.\]
%$(M,g)$ must be either conformally flat or locally symmetric. 
In dimension five, it suffices to let $\omega$ be the largest eigenvalue of $\WW$.
%Here $c(n)$ is the same as in Lemma \ref{atleast5}.
\end{proposition}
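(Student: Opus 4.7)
The natural strategy is to apply Theorem \ref{main1} and show that, under the pointwise assumption, its right-hand side is non-positive. That will directly yield the claimed integral inequality. Concretely, it suffices to verify the pointwise bound
\[
2\,\langle \WW,\WW^2+\WW^\sharp\rangle \;\leq\; \langle \Rc\circ g,\WW^2\rangle.
\]

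For the cubic Weyl term on the left, Lemma \ref{atleast5} immediately supplies $\langle \WW,\WW^2+\WW^\sharp\rangle \leq \tfrac{2(n-1)}{3}\,\omega\,|\WW|^2$. For the right-hand side, I would work at a fixed point in an orthonormal frame that simultaneously diagonalizes $\Rc$, $\EE$, and $g$, and invoke Lemma \ref{productW} with $A = \Rc$, to get
\[
\langle \Rc\circ g,\WW^2\rangle = \tfrac{1}{2}\sum_{i,j,p,q}\Rc_{ii}\,\WW_{ijpq}^{\,2}.
\]
Splitting $\Rc_{ii} = \EE_{ii} + \SS/n$, the scalar piece evaluates to $\tfrac{2\SS}{n}|\WW|^2$ (using $\sum_{i,j,p,q}\WW_{ijpq}^2 = 4|\WW|^2$ in the operator-norm convention), while the hypothesis $\EE_{ii} \geq -\ell$ on every diagonal entry yields a lower bound $-2\ell\,|\WW|^2$ for the traceless piece. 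Together,
\[
\langle \Rc\circ g,\WW^2\rangle \;\geq\; 2|\WW|^2\left(\frac{\SS}{n} - \ell\right).
\]

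Combining the two estimates, the inequality to be verified becomes
\[
\tfrac{4(n-1)}{3}\,\omega\,|\WW|^2 \;\leq\; 2|\WW|^2\!\left(\tfrac{\SS}{n}-\ell\right),
\]
which is precisely the stated hypothesis. For the dimension-five assertion, I would replace the invocation of Lemma \ref{atleast5} by Remark \ref{fivecase}; via the identity (\ref{sharpcubic}) and Lemma \ref{Wcubic}, the same bound for $\langle \WW,\WW^2+\WW^\sharp\rangle$ holds with $\omega$ taken to be the largest (signed) eigenvalue rather than the largest magnitude, and the rest of the argument is unchanged. I do not anticipate a serious obstacle; the one point requiring care is correctly tracking the factor of four that converts $\sum_{i,j,p,q}\WW_{ijpq}^2$ into $4|\WW|^2$ under the paper's operator-norm convention, and making sure that the frame diagonalizing $\Rc$ is also used when invoking Lemma \ref{productW}.
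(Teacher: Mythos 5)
Your proposal is correct and follows essentially the same route as the paper: apply Theorem \ref{main1}, bound the cubic Weyl term via Lemma \ref{atleast5} (Remark \ref{fivecase} in dimension five), and bound $\left\langle \Rc\circ g,\WW^2\right\rangle$ from below by splitting $\Rc=\EE+\tfrac{\SS}{n}g$ in a diagonalizing frame, which reduces the claim exactly to the stated pointwise hypothesis. Your only cosmetic difference is invoking Lemma \ref{productW} explicitly where the paper cites a standard eigenvalue bound for $\EE\circ g$; the factor-of-four bookkeeping you flag is handled correctly.
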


\begin{proof}
By Theorem \ref{main1}, 
\begin{align*}
\int_M |\nabla \WW|^2 -\frac{(n-2)}{n-3}\int_M (\delta\WW)^2 &=2\int_M \left\langle{\WW,\WW^2+\WW^\sharp}\right\rangle -\int_M\left\langle{{\Rc\circ g},\WW^2}\right\rangle,\\
\int_M \left\langle{\WW^2,\Rc\circ g}\right\rangle &=\left\langle{(\EE+\frac{\SS}{n}g)\circ g,\WW^2}\right\rangle,\\
&=\left\langle{\EE\circ g,\WW^2}\right\rangle+\frac{2\SS}{n}|\WW|^2.
\end{align*}
By Lemma \ref{atleast5}, 
\[\left\langle{\WW,\WW^2+\WW^\sharp}\right\rangle\leq \frac{2(n-1)}{3}\omega|\WW|^2.\]
By standard inequality, an eigenvalue of $E\circ g$ must be greater than or equal to $-2\ell$. Thus, 
\[\left\langle{\EE\circ g,\WW^2}\right\rangle\geq -2\ell |\WW|^2.\]
Therefore, if $$\frac{4(n-1)}{3}\omega+2\ell\leq \frac{2\SS}{n},$$ 
then,
\[\int_M |\nabla \WW|^2 -\frac{(n-2)}{n-3}\int_M (\delta\WW)^2\leq 0.\]

In dimension five, we could use Remark (\ref{fivecase}) instead of Lemma \ref{atleast5}.
\end{proof}

Restricted to the case of harmonic Weyl curvature, we obtain the following result.
\begin{theorem}
\label{mainpointwise} Let $(M^n,g)$, $n\geq 5$, be a closed Riemannian manifold with harmonic Weyl tensor. Let $\omega$ be the largest absolute value of any eigenvalue of $\WW$ and $-\ell$ the smallest eigenvalue of $\EE$. Suppose at each point,
$$\frac{2(n-1)}{3}\omega+\ell\leq \frac{\SS}{n},$$ then $(M,g)$ must be either conformally flat or locally symmetric. In dimension five, it suffices to let $\omega$ be the largest eigenvalue of $\WW$.
%Here $c(n)$ is the same as in Lemma \ref{atleast5}.
\end{theorem}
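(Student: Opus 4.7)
The plan is to reduce Theorem \ref{mainpointwise} to Proposition \ref{p1} together with the Derdzinski--Roter classification of manifolds with parallel Weyl tensor, which was already invoked for Theorem \ref{nonnegative}.

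First, I would observe that the hypothesis of Theorem \ref{mainpointwise} is precisely the point-wise hypothesis of Proposition \ref{p1}, namely
\[\frac{2(n-1)}{3}\omega + \ell \leq \frac{\SS}{n}\]
at every point of $M$. Hence Proposition \ref{p1} applies and yields the integral inequality
\[\int_M |\nabla \WW|^2 \leq \frac{n-2}{n-3}\int_M (\delta\WW)^2.\]
Since $(M,g)$ has harmonic Weyl tensor, $\delta\WW \equiv 0$, so the right-hand side vanishes. Combined with the non-negativity of $|\nabla\WW|^2$, this forces $\nabla\WW\equiv 0$ on $M$.

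At this point the hypothesis has been converted into a parallel-Weyl statement, and I would conclude exactly as in the proof of Theorem \ref{nonnegative}: by the Derdzinski--Roter theorem \cite[Theorem 2]{dr77}, a Riemannian metric (positive definite) with parallel Weyl tensor is either locally conformally flat or locally symmetric. For the five-dimensional case, the only modification needed is to invoke Remark \ref{fivecase} when deriving the cubic estimate inside Proposition \ref{p1}; this replaces the appeal to Lemma \ref{atleast5} (which bounds things by the largest \emph{magnitude} of an eigenvalue of $\WW$) with an equivalent bound using only the largest eigenvalue, via identity \eqref{five} and Lemma \ref{Wcubic}. Proposition \ref{p1} already records this improvement, so the stronger $n=5$ statement is automatic.

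There is really no serious obstacle: the main work was done in proving the BWL identity (Theorem \ref{main1}) and in the algebraic bound on $\langle \WW, \WW^2+\WW^\sharp\rangle$ (Lemmas \ref{atleast5} and \ref{Wcubic}). The only care needed is to verify that the sign convention in Proposition \ref{p1} is compatible with passing to $\delta\WW = 0$, i.e., that the curvature term $\langle \Rc\circ g, \WW^2\rangle$ dominates $2\langle \WW, \WW^2 + \WW^\sharp\rangle$ \emph{with the right sign}; this is exactly the content of the pinching assumption, so the implication is immediate. Thus the proof is a direct three-line application of the preceding results.
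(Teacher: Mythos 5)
Your proposal is correct and is essentially identical to the paper's own proof: apply Proposition \ref{p1} under the pointwise pinching hypothesis, use $\delta\WW\equiv 0$ to force $\nabla\WW\equiv 0$, and conclude with the Derdzinski--Roter theorem, with the $n=5$ refinement already built into Proposition \ref{p1} via Remark \ref{fivecase}.
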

\begin{proof}
For harmonic Weyl tensor, $\delta \WW\equiv 0$. Then, Prop.\ref{p1} implies that $\nabla \WW\equiv 0$. Thus, $(M,g)$ has parallel Weyl curvature. By Derdzinski and Roter \cite[Theorem 2]{dr77}, the metric must be either locally conformally flat or locally symmetric.
\end{proof}
The next theorem yields a similar conclusion under an assumption on the norm.
\begin{theorem}\label{normthm}
Let $(M^n,g)$, $n\geq 5$, be a closed Riemannian manifold. Suppose at each point,
$$c(n)|\WW|+\sqrt{\frac{n-1}{n}}|\EE|\leq \frac{\SS}{n},$$ then the following holds,
\[\int_M |\nabla \WW|^2\leq \frac{(n-2)}{n-3}\int_M (\delta\WW)^2.\]
%$(M,g)$ must be either conformally flat or locally symmetric. 
Here, $c(n)$ is the same as in Lemma \ref{atleast5norm}.
\end{theorem}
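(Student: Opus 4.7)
The plan is to run the same argument as in Proposition \ref{p1}, but measure the curvature obstructions by norms rather than by extremal eigenvalues. Starting from the integral identity of Theorem \ref{main1},
\[
\int_M |\nabla \WW|^2-\frac{n-2}{n-3}\int_M (\delta\WW)^2
=2\int_M \langle \WW,\WW^2+\WW^\sharp\rangle -\int_M \langle \Rc\circ g,\WW^2\rangle,
\]
I would decompose $\Rc=\EE+\tfrac{\SS}{n}g$ so that, just as in the proof of Proposition \ref{p1},
\[
\langle \Rc\circ g,\WW^2\rangle=\langle \EE\circ g,\WW^2\rangle+\frac{2\SS}{n}|\WW|^2.
\]

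The cubic Weyl term is handled by Lemma \ref{atleast5norm}, which gives pointwise
\[
\langle \WW,\WW^2+\WW^\sharp\rangle\le c(n)\,|\WW|^3.
\]
For the mixed term, I would use Lemma \ref{eigenestimate} applied to the traceless symmetric operator $\EE$ on the $n$-dimensional tangent space: if $-\ell$ is the smallest eigenvalue of $\EE$ then $\ell\le\sqrt{\tfrac{n-1}{n}}\,|\EE|$. Combining this with the elementary observation (used in Proposition \ref{p1}) that every eigenvalue of $\EE\circ g$ is bounded below by $-2\ell$, one obtains the pointwise lower bound
\[
\langle \EE\circ g,\WW^2\rangle \ge -2\sqrt{\tfrac{n-1}{n}}\,|\EE|\,|\WW|^2.
\]

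Inserting these two estimates into the integrand yields, pointwise,
\[
2\langle \WW,\WW^2+\WW^\sharp\rangle-\langle \Rc\circ g,\WW^2\rangle
\le 2|\WW|^2\!\left(c(n)|\WW|+\sqrt{\tfrac{n-1}{n}}\,|\EE|-\tfrac{\SS}{n}\right),
\]
which is $\le 0$ under the hypothesis of the theorem. Integrating gives the desired inequality. The only step that required any care was the choice of the $|\EE|$-coefficient: one must use the sharp eigenvalue bound from Lemma \ref{eigenestimate} (rather than the trivial $\ell\le|\EE|$) in order to get the constant $\sqrt{(n-1)/n}$ stated in the theorem. Everything else is a direct substitution into Theorem \ref{main1}.
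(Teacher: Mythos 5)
Your proposal is correct and follows essentially the same route as the paper: repeat the argument of Proposition \ref{p1} starting from Theorem \ref{main1}, bound the cubic term by Lemma \ref{atleast5norm}, and bound the mixed term via $\langle \EE\circ g,\WW^2\rangle \geq -2\ell|\WW|^2$ together with $\ell\leq\sqrt{\tfrac{n-1}{n}}\,|\EE|$ from Lemma \ref{eigenestimate}. Nothing essential differs from the paper's proof.
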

\begin{proof}
The proof is similar to one above. The only difference is to obtain the following inequalities in terms of norm.
\begin{align*}
\left\langle{\WW,\WW^2+\WW^\sharp}\right\rangle &\leq c(n)|\WW|^3;\\
-\left\langle{\EE\circ g,\WW^2}\right\rangle &\leq 2\ell |\WW|^2\leq 2\sqrt{\frac{n-1}{n}}|\EE||\WW|^2.
\end{align*}
The first line is just Lemma \ref{atleast5norm} while the second line is justified by Lemma \ref{eigenestimate}.
\end{proof}

\subsection{Integral}
This subsection will focus on integral gap results using the solution to the Yamabe problem.

\textbf{Yamabe problem}: Given a compact Riemannian manifold $(M,g)$ of dimension $n\geq 3$, find a conformal metric with constant scalar curvature. 

For a conformal change of metric, the volume form and the scalar curvature transform as follows.
\begin{align*}
\overline{g}&=u^{4/(n-2)}g,\\
d\overline{\mu}&=u^{2n/(n-2)}d\mu,\\
\overline{\SS}&=u^{-(n+2)/(n-2)}\Big(-\frac{4(n-1)}{n-2}\Delta +\SS\Big)u=u^{-(n+2)/(n-2)}L_gu,\\
L_g&\doteqdot-\frac{4(n-1)}{n-2}\Delta +\SS. %=-\frac{1}{c_n} \Delta+\SS.
\end{align*}
Yamabe \cite{yam60} observed that the problem is equivalent to finding the minimizer of the functional,
\[Q[\overline{g}]=\frac{\int_M \overline{\SS} d\overline{\mu}}{(\int_M d\overline{\mu})^{(n-2)/n}}=Y[u]=\frac{\int_{M} u Lu d\mu}{(\int_M u^{2n/(n-2)}d\mu)^{(n-2)/n}}.\] 
The Yamabe constant is defined accordingly, 
\[\lambda(g)=\inf\{Y[u]:~ u>0,~ u \in C^2(M)\}.\]
The problem has an extensive history and was finally settled by R. Schoen using the positive mass theorem \cite{schoen84}; see \cite{yam60, tru68, aub76} for partial results and \cite{lp87} for an expository account.
%\begin{theorem} We have the followings:
%\begin{enumerate}
%\item (Yamabe-Trudinger-Aubin, \cite{yam60, tru68, aub76})The Yamabe problem can be solved for any closed manifold with $\lambda(M)$ smaller than that of the round sphere. 
%\item (Aubin-Schoen \cite{aub76, schoen84}) $\lambda(g)<\lambda(\mathbb{S}^n)$ unless M is conformal to the standard sphere.  
%\end{enumerate}
%\end{theorem}
When the Yamabe constant is positive, a consequence of the solution is a conformal Sobolev inequality, for $s_n = \frac{n-2}{4(n-1)}$,
\begin{equation}
\label{sobolev}
s_n\lambda (g)(\int_M |u|^{\frac{2n}{n-2}}d\mu_g)^{\frac{n-2}{n}}\leq \int_M|\nabla_g u|^2 d\mu_g+s_n\int_M\SS |u|^2 d\mu_g.  \end{equation}
%Here $c(n)=\frac{n-2}{4(n-1)}$. 
%\cite{lp87} 
This inequality will play an crucial role in the proof of Theorem \ref{mainintegral}. 

\begin{proposition} 
\label{integralrigid}
Let $(M^n,g)$, $n\geq 5$, be a closed Riemannian manifold with positive Yamabe constant $\lambda(g)$. Assume that, 
\[c_1(n)||\WW||_{L^{n/2}}+c_2(n)||\EE||_{L^{n/2}}\leq d(n){\lambda(g)}.\]
Then the following holds:
\[(1-\frac{d(n)}{s_n})\int_M |\nabla \WW|^2 \leq \frac{(n-2)}{n-3}\int_M |\delta\WW|^2+(d(n)-\frac{2}{n})\int_M \SS|\WW|^2.\]
Here, $c(n)$ is the same as in Lemma \ref{atleast5norm},
\begin{align*}
c_1(n) &=2c(n), &c_2(n) &=2\sqrt{\frac{n-1}{n}}.
\end{align*}
\end{proposition}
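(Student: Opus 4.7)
The strategy is to feed the integral Bochner--Weitzenb\"ock--Lichnerowicz identity of Theorem \ref{main1} into the conformal Sobolev inequality \eqref{sobolev} applied to $u=|\WW|$, with H\"older's inequality bridging the two. First I would split the Ricci term in Theorem \ref{main1} via $\Rc=\EE+\frac{\SS}{n}g$; Lemma \ref{productW} applied with $A=g$ (as already computed in the proof of Proposition \ref{p1}) gives
\[\int_M \langle \Rc\circ g,\WW^2\rangle = \int_M \langle \EE\circ g,\WW^2\rangle + \frac{2}{n}\int_M \SS|\WW|^2,\]
so that Theorem \ref{main1} rewrites as
\[\int_M |\nabla\WW|^2 - \frac{n-2}{n-3}\int_M (\delta\WW)^2 = 2\int_M \langle\WW,\WW^2+\WW^{\sharp}\rangle - \int_M\langle\EE\circ g,\WW^2\rangle - \frac{2}{n}\int_M \SS|\WW|^2.\]

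Next I would bound the cubic curvature quantities point-wise by norms alone. Lemma \ref{atleast5norm} gives $\langle\WW,\WW^2+\WW^{\sharp}\rangle\leq c(n)|\WW|^3$, while the eigenvalue bound from Lemma \ref{eigenestimate} (reused exactly as in the proof of Theorem \ref{normthm}) yields $-\langle\EE\circ g,\WW^2\rangle\leq 2\sqrt{\tfrac{n-1}{n}}\,|\EE|\,|\WW|^2$. Combining, the right-hand side is dominated by
\[\int_M \bigl(c_1(n)|\WW|+c_2(n)|\EE|\bigr)|\WW|^2 - \frac{2}{n}\int_M \SS|\WW|^2,\]
with $c_1,c_2$ exactly as stated.

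Now I would apply H\"older's inequality in the form $\int f\,g^2\leq \|f\|_{L^{n/2}}\|g\|_{L^{2n/(n-2)}}^2$ with $g=|\WW|$, together with the standing hypothesis $c_1(n)\|\WW\|_{L^{n/2}}+c_2(n)\|\EE\|_{L^{n/2}}\leq d(n)\lambda(g)$, to get
\[\int_M \bigl(c_1(n)|\WW|+c_2(n)|\EE|\bigr)|\WW|^2 \leq d(n)\lambda(g)\Bigl(\int_M |\WW|^{\frac{2n}{n-2}}d\mu\Bigr)^{\frac{n-2}{n}}.\]
The conformal Sobolev inequality \eqref{sobolev} applied to $u=|\WW|$ then bounds the right-hand side by $\frac{d(n)}{s_n}\bigl(\int_M |\nabla|\WW||^2 + s_n\int_M \SS|\WW|^2\bigr)$, and Kato's inequality $|\nabla|\WW||\leq|\nabla\WW|$ upgrades this to $\frac{d(n)}{s_n}\int_M|\nabla\WW|^2 + d(n)\int_M\SS|\WW|^2$. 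Substituting back and moving the gradient term to the left gives precisely the asserted inequality.

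The argument requires no new identities beyond those already in the paper; the main obstacle is just careful bookkeeping of the constants so that $c_1,c_2$ come out as in the statement and that the $\SS|\WW|^2$ coefficient on the right reads $d(n)-\frac{2}{n}$ rather than something off by a factor. A minor subtlety worth verifying is that Kato's inequality applies even at points where $|\WW|$ vanishes (handled, as usual, by working on the open set $\{|\WW|>0\}$ and noting the inequality is trivially saturated elsewhere); no other technical difficulty arises.
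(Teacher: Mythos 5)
Your proposal is correct and follows essentially the same route as the paper's own proof: Theorem \ref{main1} combined with Lemma \ref{atleast5norm} and the eigenvalue bound of Lemma \ref{eigenestimate}, then H\"older, the conformal Sobolev inequality \eqref{sobolev} applied to $u=|\WW|$, and the classical Kato inequality, with the constants $c_1(n)=2c(n)$, $c_2(n)=2\sqrt{\tfrac{n-1}{n}}$ arising exactly as in the paper. The bookkeeping you outline (splitting $\Rc\circ g$ into $\EE\circ g+\tfrac{\SS}{n}g\circ g$ so that the $\tfrac{2}{n}\int_M\SS|\WW|^2$ term moves to the left) reproduces the stated coefficients precisely, so no gap remains.
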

\begin{proof}
From Theorem \ref{main1}, 
\begin{align*}
\int_M |\nabla \WW|^2-\frac{(n-2)}{n-3}\int_M |\delta\WW|^2 &=2\int_M \left\langle{\WW,\WW^2+\WW^\sharp}\right\rangle -\int_M\left\langle{{\Rc\circ g},\WW^2}\right\rangle.
\end{align*}
By Lemma \ref{atleast5norm},
\begin{align*}
2\left\langle{\WW,\WW^2+\WW^\sharp}\right\rangle &\leq 2c(n)|\WW|^3= c_1(n)|\WW|^3.\\
-\left\langle{\WW^2, E \circ g}\right\rangle &\leq  2\ell|\WW|^2\leq  2\sqrt{\frac{n-1}{n}}|\EE||\WW|^2= c_2(n)|\EE||\WW|^2.
\end{align*}
Applying the H\"{o}lder's inequality and the conformal Sobolev's inequality, 
\begin{align*}
\int_M |\WW|^3 =||\WW||_{L^3}^3 &\leq ||\WW||_{L^{n/2}}||\WW||^2_{L^{2n/(n-2)}}\\
 &\leq \frac{||\WW||_{L^{n/2}}}{\lambda(g)}(\frac{1}{s_n}\int_M|\nabla |\WW||^2+\int_M \SS|\WW|^2);\\
\int_M |\EE||\WW|^2  &\leq ||\EE||_{L^{n/2}}||\WW||^2_{L^{2n/(n-2)}}\\
 & \leq \frac{||\EE||_{L^{n/2}}}{\lambda(g)}(\frac{1}{s_n}\int_M|\nabla |\WW||^2+\int_M \SS|\WW|^2).   
\end{align*}
Also, we recall the classical Kato inequality: 
$$|\nabla \WW|^2\geq |\nabla |\WW||^2.$$
Combining inequalities above yield,
\begin{align*}
\int_M|\nabla \WW|^2+ \frac{2\SS}{n}|\WW|^2 &\leq c_1(n)\int_M |\WW|^3+c_2(n)\int_M |\EE||\WW|^2,\\
&\leq \frac{c_1(n)||\WW||_{L^{n/2}}+c_2(n)||\EE||_{L^{n/2}}}{\lambda(g)}\Big(\frac{1}{s_n}\int_M |\nabla |\WW||^2+\int_M \SS|\WW|^2\Big),\\
&\leq d(n)(\frac{1}{s_n}\int_M |\nabla \WW|^2+\int_M \SS|\WW|^2).
\end{align*}

\end{proof}
The proof of Theorem \ref{mainintegral} is a refinement of the above.
\begin{proof}
For harmonic Weyl tensor, $\delta \WW\equiv 0$. Here, there is an improved Kato inequality for harmonic Weyl tensor \cite{branson00}, 
$$|\nabla \WW|^2\geq \frac{n+1}{n-1}|\nabla |\WW||^2 .$$
%doteqdot c_3(n)|\nabla |\WW||^2
By Theorem \ref{pointwiseBW},
\begin{align*}
\Delta |\WW|^2 &=2|\nabla \WW|^2-4\left\langle{\WW,\WW^2+\WW^\sharp}\right\rangle +2\left\langle{\Rc\circ g, \WW^2}\right\rangle;\\
|\WW|\Delta|\WW| &=\frac{1}{2}\Delta |\WW|^2-|\nabla |\WW||^2\\
&\geq \frac{2}{n-1}|\nabla |\WW||^2-2\left\langle{\WW,\WW^2+\WW^\sharp}\right\rangle +\left\langle{\Rc\circ g, \WW^2}\right\rangle,\\
&\geq \frac{2}{n-1}|\nabla |\WW||^2+\frac{2\SS}{n}|\WW|^2-c_1(n)|\WW|^3-c_2(n)|\EE||\WW|^2.
\end{align*}
Here, $c_1(n), c_2(n)$ are as in Proposition \ref{integralrigid}. Now we suppose $|\WW|>0$ everywhere and let $u=|\WW|$ (if $\WW$ assumes value zero at some point, we could replace $u=|\WW|+\epsilon$, $\epsilon>0$ and use a standard limit argument for $\epsilon\rightarrow 0$). Then,
\begin{align*}
u^\alpha\Delta u^\alpha &=\frac{\alpha-1}{\alpha}|\nabla u^\alpha|^2+\alpha u^{2\alpha-2} u\Delta u,\\
&\geq (1-\frac{n-3}{(n-1)\alpha})|\nabla u^\alpha|^2+\frac{2\alpha}{n}\SS u^{2\alpha}-\alpha c_1(n)u^{2\alpha+1}-c_2(n)\alpha |\EE|u^{2\alpha} 
\end{align*}
Using integration by parts and applying H\"{o}lder's inequality yield,
\begin{align*}
0 &\geq (2-\frac{n-3}{(n-1)\alpha})\int_M|\nabla u^\alpha|^2+\frac{2\alpha}{n}\int_M\SS u^{2\alpha}-\alpha c_1(n)\int_M u^{2\alpha+1}-\alpha c_2(n) \int_M|\EE|u^{2\alpha},\\
&\geq (2-\frac{n-3}{(n-1)\alpha})||\nabla u^\alpha||_{L^2}^2+ \frac{2\alpha}{n}||\SS u^{2\alpha}||_{L^1}\\
&-\alpha c_1(n)||\WW||_{L^{n/2}} ||u^{\alpha}||_{L^{2n/(n-2)}}^2-\alpha c_2(n)||\EE||_{L^{n/2}} ||u^{\alpha}||_{L^{2n/(n-2)}}^2
\end{align*} 
Applying the Sobolev inequality (\ref{sobolev}), we have,
\begin{align*}
 & A(n,\alpha) \doteqdot (2-\frac{n-3}{(n-1)\alpha}); \\
& \alpha c_1(n)||\WW||_{L^{n/2}} ||u^{\alpha}||_{L^{2n/(n-2)}}^2+\alpha c_2(n)||\EE||_{L^{n/2}} ||u^{\alpha}||_{L^{2n/(n-2)}}^2 \\
&\geq A(n,\alpha) \int_M|\nabla u^\alpha|^2+\frac{2\alpha}{n}\int_M\SS u^{2\alpha}\\
&\geq s_n\lambda(g)A(n,\alpha)||u^{\alpha}||_{L^{2n/(n-2)}}^2+(\frac{2\alpha}{n}-A(n,\alpha)s_n)\int_M\SS u^{2\alpha}.
\end{align*}
We consider,
\begin{align*}
\frac{2\alpha}{n}-A(n,\alpha)s_n &=\frac{8(n-1)^2\alpha^2-2n(n-1)(n-2)\alpha+n(n-2)(n-3)}{4n(n-1)^2\alpha}.
\end{align*}
It follows that if $n\neq 5$, we could choose $\frac{n-3}{2(n-1)}<\alpha$ such that $\frac{2\alpha}{n}-A(n,\alpha)s_n=0$. In that case, if the assumption in the theorem does not hold, then $\WW\equiv 0$. 
\end{proof}
%Analyse the equality case above.

Note that, from the proof, if $n=5$ then the quadratic above is always positive. Therefore, it remains to choose $\frac{1}{4}<\alpha$ to minimize $\frac{2\alpha^2}{4\alpha-1}$. Thus, we choose $\alpha=\frac{1}{2}$ and use Lemma \ref{atleast5norm} to obtain the following result.
\begin{theorem}\label{case5}
Let $(M^5,g)$ be a closed Riemannian manifold with harmonic Weyl tensor and positive scalar curvature and Yamabe constant $\lambda(g)$. Then, unless $(M,g)$ is locally conformally flat, 
\[\frac{8}{\sqrt{10}}||\WW||_{L^{n/2}}+\frac{2}{\sqrt{5}}||\EE||_{L^{n/2}}\geq \frac{3}{16}{\lambda(g)}.\]
\end{theorem}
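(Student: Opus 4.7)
The proof follows the same template as Theorem \ref{mainintegral}, and the author's remark immediately preceding the statement indicates exactly where to deviate. I would begin from the pointwise BWL identity of Theorem \ref{pointwiseBW}; bounding the zero-order terms via Lemma \ref{atleast5norm} (with $c(5)=8/\sqrt{10}$) and Lemma \ref{eigenestimate} and invoking the harmonic-Weyl improved Kato inequality $|\nabla\WW|^2\geq\tfrac{n+1}{n-1}|\nabla|\WW||^2$, which at $n=5$ becomes $\tfrac{3}{2}|\nabla|\WW||^2$, yields the scalar differential inequality
\[
u\Delta u\ \geq\ \tfrac{1}{2}|\nabla u|^2+\tfrac{2}{5}\SS\,u^2-\tfrac{16}{\sqrt{10}}u^3-\tfrac{4}{\sqrt{5}}|\EE|u^2,
\]
with $u=|\WW|$ (or $|\WW|+\varepsilon$ with $\varepsilon\downarrow 0$, as done earlier).

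Next, I would multiply by $u^{2\alpha-2}$, rewrite in terms of $u^\alpha\Delta u^\alpha$ and $|\nabla u^\alpha|^2$, and integrate by parts over $M$. Hölder's inequality with exponents $(\tfrac{5}{2},\tfrac{5}{3})$ applied to both cubic integrals pulls out $\|\WW\|_{L^{5/2}}$ and $\|\EE\|_{L^{5/2}}$, the remaining factor being $\|u^\alpha\|_{L^{10/3}}^2$. Invoking the conformal Sobolev inequality \eqref{sobolev} with $s_5=\tfrac{3}{16}$ to control $\|u^\alpha\|_{L^{10/3}}^2$ by $\int|\nabla u^\alpha|^2+\int\SS u^{2\alpha}$ produces the master estimate
\[
\alpha\Bigl(\tfrac{16}{\sqrt{10}}\|\WW\|_{L^{5/2}}+\tfrac{4}{\sqrt{5}}\|\EE\|_{L^{5/2}}\Bigr)\|u^\alpha\|_{L^{10/3}}^2 \ \geq\ s_5\lambda(g)A(5,\alpha)\|u^\alpha\|_{L^{10/3}}^2+\Bigl(\tfrac{2\alpha}{5}-s_5 A(5,\alpha)\Bigr)\int_M\SS\,u^{2\alpha},
\]
where $A(5,\alpha)=2-\tfrac{1}{2\alpha}$.

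The one genuinely new point — and the only real obstacle — is the dimensional obstruction the excerpt foreshadows: for $n=5$ the quadratic $128\alpha^2-120\alpha+30$ governing the sign of $\tfrac{2\alpha}{5}-s_5 A(5,\alpha)$ has negative discriminant, so that coefficient is strictly positive for every admissible $\alpha>\tfrac14$; the trick of zeroing it out, used when $n\geq 6$, is unavailable here. The remedy is to invoke the standing hypothesis $\SS>0$ and simply discard this nonnegative term. Assuming $u\not\equiv 0$ (otherwise $(M,g)$ is locally conformally flat and we are done), dividing by $\alpha\|u^\alpha\|_{L^{10/3}}^2$ gives
\[
\tfrac{16}{\sqrt{10}}\|\WW\|_{L^{5/2}}+\tfrac{4}{\sqrt{5}}\|\EE\|_{L^{5/2}}\ \geq\ s_5\lambda(g)\frac{A(5,\alpha)}{\alpha}.
\]
The final step is to optimize over $\alpha>\tfrac14$: elementary calculus (equivalently, minimizing the author's $\tfrac{2\alpha^2}{4\alpha-1}$) shows $A(5,\alpha)/\alpha=2/\alpha-1/(2\alpha^2)$ attains its maximum value $2$ at $\alpha=\tfrac12$. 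Substituting and dividing both sides by $2$ yields exactly the stated
\[
\tfrac{8}{\sqrt{10}}\|\WW\|_{L^{5/2}}+\tfrac{2}{\sqrt{5}}\|\EE\|_{L^{5/2}}\ \geq\ \tfrac{3}{16}\lambda(g).
\]
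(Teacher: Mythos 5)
Your proposal is correct and is essentially the paper's own argument: the paper proves Theorem \ref{case5} by running the proof of Theorem \ref{mainintegral} with $c(5)=8/\sqrt{10}$ from Lemma \ref{atleast5norm}, observing that for $n=5$ the quadratic $128\alpha^2-120\alpha+30$ is always positive so the term $\bigl(\tfrac{2\alpha}{5}-s_5A(5,\alpha)\bigr)\int_M\SS u^{2\alpha}$ is discarded using $\SS>0$, and then choosing $\alpha=\tfrac12$ to optimize $A(5,\alpha)/\alpha$ (equivalently minimizing $\tfrac{2\alpha^2}{4\alpha-1}$), exactly as you do. Your write-up just makes explicit the steps the paper leaves as a sketch, including the role of the positive scalar curvature hypothesis and the case $\WW\equiv 0$.
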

%Write down the result for $n=5$.
 
%d(n) =\frac{(n+1)(n-2)}{4(n-1)^2} \text{ for $n\leq 8$} & d(n) &=\frac{2}{n} \text{ for $n>8$}.
%%%%%%%%%%%%%%%%%%%%%%%
\section{\textbf{Appendix}}
First, we'll show a general estimate.
\begin{lemma}
\label{Wcubic}
Let $\sum_{i=1}^n x_i=0$ and $x_i\leq s$ then, 
\begin{equation}
x^3=\sum_{i=1}^n x_i^3\leq \frac{s(n-2)}{n-1} \sum_{i=1}^n x_i^2=\frac{s(n-2)}{n-1}x^2.
\end{equation}
\end{lemma}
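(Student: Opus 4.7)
The plan is to prove the estimate by induction on $n \geq 2$. The base case $n = 2$ is immediate: $x_1 + x_2 = 0$ forces $\sum x_i^3 = x_1^3 + (-x_1)^3 = 0$, while the right-hand side reduces to $\frac{0 \cdot s}{1} \sum x_i^2 = 0$, so the inequality reads $0 \leq 0$.

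For the inductive step at $n \geq 3$, I would first reduce to the case $s = \max_i x_i$: the left-hand side is independent of $s$ and the right-hand side is monotone increasing in $s$, so it suffices to prove the bound with $s_0 := \max_i x_i$ in place of $s$. After relabeling, take $x_1 = s$, so that $\sum_{i \geq 2} x_i = -s$. The natural reduction is via shifted variables $z_i := x_{i+1} + s/(n-1)$ for $i = 1, \dots, n-1$; these satisfy $\sum_{i=1}^{n-1} z_i = 0$ and $z_i \leq s + s/(n-1) = sn/(n-1)$, so the inductive hypothesis for $n-1$ variables applies with the bound $s' = sn/(n-1)$.

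Expanding $(z_i - s/(n-1))^k$ for $k = 2, 3$ and using $\sum z_i = 0$ yields, after a short bookkeeping computation, the key identity
\begin{equation*}
(n-1) \sum_{i=1}^n x_i^3 - s(n-2) \sum_{i=1}^n x_i^2 = (n-1) \sum_{i=1}^{n-1} z_i^3 - s(n+1) \sum_{i=1}^{n-1} z_i^2.
\end{equation*}
The inductive hypothesis gives $\sum z_i^3 \leq \frac{n-3}{n-2} \cdot \frac{sn}{n-1} \sum z_i^2$, so the right-hand side above is at most $\left[\frac{n(n-3)}{n-2} - (n+1)\right] s \sum z_i^2 = -\frac{2(n-1)}{n-2} s \sum z_i^2 \leq 0$. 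Dividing by $n-1 > 0$ closes the induction.

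I expect the only non-automatic step to be the algebraic identity, whose derivation is routine (but a bit tedious) and relies only on the binomial expansions of $(z_i - s/(n-1))^k$ together with the cancellation from $\sum z_i = 0$. The final numerical check $\frac{n(n-3)}{n-2} \leq n + 1$, which simplifies to $n \geq 1$, is immediate; no further obstacle is anticipated.
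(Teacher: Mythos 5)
Your induction is correct, and it is a genuinely different argument from the one in the paper. I checked the key identity: writing $t=s/(n-1)$, $x_1=s$, $x_{i+1}=z_i-t$, one gets $\sum_i x_i^2=\sum_i z_i^2+\tfrac{n}{n-1}s^2$ and $\sum_i x_i^3=\sum_i z_i^3-\tfrac{3s}{n-1}\sum_i z_i^2+\tfrac{n(n-2)}{(n-1)^2}s^3$, and the cubic terms in $s$ cancel exactly in $(n-1)\sum x_i^3-s(n-2)\sum x_i^2$, yielding $(n-1)\sum z_i^3-s(n+1)\sum z_i^2$ as you claim; the reduction to $s=\max_i x_i\ (\geq 0$, since the $x_i$ sum to zero$)$ is legitimate because the right-hand side is nondecreasing in $s$, and the final coefficient computation $\tfrac{n(n-3)}{n-2}-(n+1)=-\tfrac{2(n-1)}{n-2}$ is right. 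The paper instead treats the problem as a constrained optimization of $f(x)=x^3/x^2$: it invokes compactness for existence of a maximizer, applies the Karush--Kuhn--Tucker conditions to show that at a maximum the coordinates below $s$ take at most two values (roots of a quadratic), and then rules out the two-value case by a comparison with the candidate configuration $(s,-\tfrac{s}{n-1},\dots,-\tfrac{s}{n-1})$. Your route buys elementarity and self-containedness — no existence/compactness step, no Lagrange-multiplier case analysis — and the slack term $-\tfrac{2(n-1)}{n-2}\,s\sum_i z_i^2$ makes the equality case transparent (equality forces $z_i\equiv 0$, i.e.\ the extremal configuration above, or $s=0$). The paper's variational route, on the other hand, exhibits the maximizer directly and is the standard template (following Huisken and Tachibana) for such eigenvalue-pinching estimates. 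Both give the same sharp constant $\tfrac{s(n-2)}{n-1}$.
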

\begin{proof}
When $x^2=\sum_{i=1}^n x_i^2=0$, the statement follows vacuously. 

When $x^2>0$, the problem reduces to maximizing $f(x)=\frac{x^3}{x^2}$ given constraints 
\begin{itemize}
\item $\sum_{i=1}^n x_i=0$;
\item $x_i\leq s>0$.
\end{itemize} 

By compactness of the constraints, the maximum exists. 
Also, we observe that the maximum must be at least
\begin{equation}\label{atleast}\frac{s^3-(n-1)(\frac{s}{n-1})^3}{s^2+(n-1)(\frac{s}{n-1})^2}=\frac{s(n-2)}{n-1}.
\end{equation}
By Karush-Kuhn-Tucker extension of the Lagrange multiplier method, a maximum of $f(x)$ is obtained at $x$ such that:
\begin{itemize}
\item $\nabla f(x)=\sum_{i=1}^n \mu_i \nabla (x_i-s)+\lambda \nabla (\sum_{i=1}^n x_i)$.
\item $\mu_i\geq 0$ and $\mu_i=0$ if $x_i<s$.
\end{itemize}
The first condition implies that, for any $i$,
\[3x_i^2 x^2-2x_i x^3=x^4 (\mu_i+\lambda). 
\]
By the second condition, if $x_i<s$, $x_i$ can be one of the roots, $y_1\leq  y_2$, of the quadratic on $y$: $3x^2 y^2-2x^3 y_i-\lambda x^4=0$. Therefore, there are two cases.

\textbf{Case 1:} There are $m$ $y_1$'s, $p$ $y_2$'s, and $q$ of $s$'s and $y_2<s$. Then we have,
\begin{align*}
my_1+py_2+qs &=0;\\
f(x)=\frac{x^3}{x^2} &=\frac{p y_2^3+q s^3+my_1^3}{py_2^2 +qs^2+my_1^2}\\
&=\frac{py_2^3+qs^3-y_1^2(py_2+qs)}{py_2^2+qs^2-y_1(py_2+qs)}=\frac{py_2(y_2^2-y_1^2)+qs(s^2-y_1^2)}{py_2(y_2-y_1)+qs(s-y_1)}.
%y_1+y_2 &=\frac{2 x^3}{3x^2};\\
%y_1 y_2 &= -\frac{\lambda}{3}x^2.  
\end{align*}
We observe that, 
\begin{align*}
\frac{py_2(y_2^2-y_1^2)+qs(s^2-y_1^2)}{py_2(y_2-y_1)+qs(s-y_1)} &\leq \frac{p s(s^2-y_1^2)+qs(s^2-y_1^2)}{ps(s-y_1)+qs(s-y_1)}\\
\leftrightarrow p^2 s y_2 (s-y_1)(y_2^2-y_1^2)&+qspy_2(y_2^2-y_1^2)(s-y_1)\\
\leq p^2 sy_2(s^2-y_1^2)(y_2-y_1) &+qspy_2(s^2-y_1^2)(y_2-y_1)\\
\leftrightarrow qspy_2(y_2-y_1)(s-y_1)(y_2-s)&\leq p^2 sy_2(y_2-y_1)(s-y_1)(s-y_2).
\end{align*}  
Thus, $f(x)\leq s+y_1$. As $p, q>0$, $-y_1>\frac{s}{n-1}$, so $f(x)<\frac{s(n-2)}{n-1}$, contradicting (\ref{atleast}). Thus, this case is ruled out.

\textbf{Case 2:} There are $m$ $y_1$'s and $q$ of $s$'s. Then we have,
\begin{align*}
my_1+qs &=0;\\
f(x)=\frac{x^3}{x^2} &=\frac{q s^3+my_1^3}{qs^2+my_1^2}\\
&=\frac{qs^3-y_1^2qs}{qs^2-y_1qs}=s+y_1=s(1-\frac{q}{m})\leq s(1-\frac{1}{n-1})=\frac{s(n-2)}{n-1}.
%y_1+y_2 &=\frac{2 x^3}{3x^2};\\
%y_1 y_2 &= -\frac{\lambda}{3}x^2.  
\end{align*}
That concludes the proof. 
\end{proof}
\subsection{Four-manifolds}
\label{fourdim}
In this section, we explain how computation in previous Sections simplifies greatly in dimension four. 

When $n=4$, $\WW=\WW^{+}+\WW^{-}$. First, we observe the following.
\begin{lemma} In dimension four,
\[\left\langle{\EE\circ g, \WW^2}\right\rangle= 0.\]
The statement also holds when replacing $\WW$ by $\WW^{\pm}$. 
\end{lemma}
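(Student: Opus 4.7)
The plan is to reduce the vanishing to the basis-free statement that $rc(\WW^{2})$ is a scalar multiple of the metric in dimension four, and then to verify that statement via the Hodge star decomposition $\wedge^{2}=\wedge^{2}_{+}\oplus\wedge^{2}_{-}$.

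First, I would apply the second identity of Lemma~\ref{productW} with $A=\EE$ in an orthonormal basis diagonalizing $\EE$ to write
\begin{equation*}
\langle\EE\circ g,\WW^{2}\rangle=\tfrac{1}{2}\sum_{i,j,p,q}\EE_{ii}\,\WW_{ijpq}^{2}=\tfrac{1}{2}\sum_{i}\EE_{ii}\,K_{i},\qquad K_{i}:=\sum_{j,p,q}\WW_{ijpq}^{2}.
\end{equation*}
Since $\EE$ is trace-free, the lemma follows as soon as $K_{i}$ is independent of $i$. Equivalently (as a tensorial, basis-free statement), it suffices to show that the symmetric $(2,0)$-tensor $T_{ik}:=\sum_{j,p,q}\WW_{ijpq}\WW_{kjpq}=2\,rc(\WW^{2})_{ik}$ equals $c\,g_{ik}$ for some function $c$ in dimension four.

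To establish this I would pass to a Singer--Thorpe orthonormal frame $\{e_{1},\ldots,e_{4}\}$ of $T_{x}M$: one whose induced basis
\begin{equation*}
\omega^{\pm}_{1}=\tfrac{1}{\sqrt{2}}(e_{1}\wedge e_{2}\pm e_{3}\wedge e_{4}),\quad \omega^{\pm}_{2}=\tfrac{1}{\sqrt{2}}(e_{1}\wedge e_{3}\mp e_{2}\wedge e_{4}),\quad \omega^{\pm}_{3}=\tfrac{1}{\sqrt{2}}(e_{1}\wedge e_{4}\pm e_{2}\wedge e_{3})
\end{equation*}
of $\wedge^{2}_{\pm}$ simultaneously diagonalizes $\WW^{+}$ and $\WW^{-}$, with eigenvalues $\lambda^{\pm}_{r}$ ($r=1,2,3$). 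Each basis two-form $e_{i}\wedge e_{j}$ is of the form $\tfrac{1}{\sqrt{2}}(\omega^{+}_{r}\pm\omega^{-}_{r})$ for a unique $r=r(i,j)$, which gives $|\WW(e_{i}\wedge e_{j})|^{2}=\tfrac{1}{2}\bigl((\lambda^{+}_{r})^{2}+(\lambda^{-}_{r})^{2}\bigr)$. As $j$ runs over $\{1,\ldots,4\}\setminus\{i\}$, the indices $r(i,j)$ exhaust $\{1,2,3\}$ exactly once, so $K_{i}=\sum_{r}((\lambda^{+}_{r})^{2}+(\lambda^{-}_{r})^{2})$ is the same for every $i$. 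For $i\neq k$, each summand $\langle \WW(e_{i}\wedge e_{j}),\WW(e_{k}\wedge e_{j})\rangle$ of $T_{ik}$ pairs vectors lying in distinct $\omega^{\pm}$-eigenlines (different $r$-indices), which are mutually orthogonal; hence $T_{ik}=0$. Thus $T=\bigl(|\WW^{+}|_{\text{op}}^{2}+|\WW^{-}|_{\text{op}}^{2}\bigr)\,g$, and the conclusion $\langle \EE\circ g,\WW^{2}\rangle=0$ follows. The $\WW^{\pm}$ version is identical: $\WW^{\pm}$ itself is trace-free in dimension four, and one merely sets $\lambda^{\mp}_{r}\equiv 0$ throughout.

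The main obstacle is the second step, proving $rc(\WW^{2})\propto g$. This is a strictly four-dimensional phenomenon forced by $\wedge^{2}=\wedge^{2}_{+}\oplus\wedge^{2}_{-}$, so some incarnation of the Singer--Thorpe simultaneous diagonalization (or an equivalent Hodge-star argument) appears unavoidable. Once a sign convention for $\omega^{\pm}_{r}$ is fixed, the off-diagonal vanishing reduces to the orthogonality of distinct $\omega^{\pm}$-basis elements, and the remaining content is just bookkeeping.
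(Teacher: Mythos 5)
Your proof is correct and follows essentially the same route as the paper: both reduce the claim to the four-dimensional identity $\sum_{j,p,q}\WW_{ijpq}\WW_{kjpq}=|\WW|^{2}g_{ik}$ (i.e.\ $rc(\WW^{2})\propto g$) and then conclude by tracelessness of $\EE$ in an $\EE$-diagonalizing frame. The only difference is that the paper cites this identity as well known, whereas you verify it directly via the Berger/Singer--Thorpe normal form and the splitting $\wedge^{2}=\wedge^{2}_{+}\oplus\wedge^{2}_{-}$ (the same normal form the paper records in its appendix), which makes your argument self-contained but not a genuinely different approach.
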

\begin{proof}
It is well-known that, for example see \cite[Lemma 2.1]{caotran1}
\[\sum_{k,p,q}\WW_{ikpq}\WW_{j}^{~kpq}=|\WW|^2 g_{ij}.\]
Therefore, for an orthonormal frame diagonalizing both $g$ and $\EE$,
\[\left\langle{\EE\circ g, \WW^2}\right\rangle= \sum_{i,j,p,q}\EE_{ii}\WW_{ijpq}^2=|\WW|^2\sum_i \EE_{ii}=0.\]
\end{proof}

As a consequence, our BWL type formula (\ref{BWintegral}) becomes,
\begin{equation}
\label{bw4}
\int_M |\nabla \WW|^2-2\int_M (\delta\WW)^2=\int_M \left\langle{\WW,\WW^2+\WW^\sharp}\right\rangle -\frac{1}{2}\int_M \SS|\WW|^2.
\end{equation}
\begin{remark} Indeed, the same identity holds when replacing $\WW$ by $\WW^{\pm}$. 
\end{remark}
%We observe that $\WW^{\pm}$ is a self-adjoint operator on a dimension three vector space. 
As $\WW^{\pm}$ is traceless and satisfies the first Bianchi identity, there is a normal form by M. Berger \cite{berger61, st69}. %(it first came to our attention through the works of \cite{caopeng1, noronha97}). 
That is, there exists an orthonormal basis $\{e_{i}\}_{i=1}^4$ for $T_{p}M$, of which $\{e_{12}, e_{13}, e_{14}, e_{34}, e_{42}, e_{23}\}$ becomes a basis of $\Lambda_{2}$, such that
\begin{align}
\label{normalform}
\WW &=
 \left( \begin{array}{cc}
A & B \\
B & A \end{array} \right);\\
A &=\text{diag}(a_{1}, a_{2}, a_{3}), \nonumber\\
B &=\text{diag}(b_{1}, b_{2}, b_{3}),\nonumber\\
0 &=a_1+a_2+a_3=b_1+b_2+b_3.\nonumber
\end{align}
Then, the self-dual and anti-self-dual $\WW^{\pm}$ could be computed as,
\[\WW^{\pm}=
 \left( \begin{array}{cc}
\frac{A\pm B}{2} & \frac{B\pm A}{2} \\
\frac{B\pm A}{2} & \frac{A\pm B}{2} \end{array} \right).\]
%We could also use the following basis to diagonalize $\WW$.  
%\begin{align}
%\label{Hodgebase}
%\{\frac{1}{\sqrt{2}}(e_{12}+e_{34}),\frac{1}{\sqrt{2}}(e_{13}-e_{24}),\frac{1}{\sqrt{2}}(e_{14}+e_{23})\} &\mbox{ for } \Lambda_{2}^{+},\\
%\{\frac{1}{\sqrt{2}}(e_{12}-e_{34}),\frac{1}{\sqrt{2}}(e_{13}+e_{24}),\frac{1}{\sqrt{2}}(e_{14}-e_{23})\} &\mbox{ for } \Lambda_{2}^{-}.\nonumber
%\end{align}
%With respect to this basis,
%\[\WW=
% \left( \begin{array}{cc}
%A+B & 0 \\
%0 & A-B \end{array} \right).\]  
Using this set-up, it is immediate to check that,
\begin{align*}
\left\langle{\WW^+, (\WW^+)^\sharp}\right\rangle &= \frac{1}{2}\sum_{i,j,k,l,p,q}\WW^+_{ijkl}\WW^+_{ipkq}\WW^+_{jplq}=6 \text{det}(\WW^+);\\
\left\langle{\WW^+, (\WW^+)^2}\right\rangle &=\frac{1}{8}\sum_{i,j,k,l,p,q}\WW^+_{ijkl}\WW^+_{ijpq}\WW^+_{klpq}=3\text{det}(\WW^+).
\end{align*}

Therefore, another way to write (\ref{bw4}) is,
\begin{equation}
\label{bw41}
\int_M |\nabla \WW^+|^2-2\int_M (\delta\WW^+)^2=\int_M 18\text{det}(\WW^+) -\frac{1}{2}\int_M \SS|\WW^+|^2.
\end{equation}
\begin{remark} This is the same as \cite[Equation 3.23]{CGY03} modulo out norm convention. 
\end{remark}
 
Furthermore, as the self-dual and the anti-self-dual part are operators on a vector space of dimension three, Lemma \ref{Wcubic} yields, for $\omega$ the largest eigenvalue of $\WW^+$,
 \[(\WW^+)^3 \leq \frac{\omega}{2}|\WW|^2.\]
 Consequently, we recover the following result which is originally due to \cite[Theorem 4.2]{mw93}.
\begin{theorem}
Let $(M^4,g)$ be a closed Riemannian manifold with harmonic self-dual Weyl tensor. $\omega$ denotes the largest eigenvalue of $\WW^+$. Suppose that, at each point, 
$$  6\omega \leq {\SS},$$ 
then $(M,g)$ has parallel Weyl tensor. 
\end{theorem}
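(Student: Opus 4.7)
The plan is to specialize the integral identity (\ref{bw41}) to the situation at hand and then control the cubic term by the algebraic bound from Lemma \ref{Wcubic}, which is precisely the kind of input for which the appendix was designed. Since the hypothesis is harmonic self-dual Weyl, $\delta \WW^+\equiv 0$, so (\ref{bw41}) collapses to
\[
\int_M |\nabla \WW^+|^2 \;=\; 18\int_M \det(\WW^+) \;-\; \tfrac{1}{2}\int_M \SS\,|\WW^+|^2.
\]
The goal is then to show the right-hand side is non-positive under the point-wise assumption $6\omega\leq \SS$.

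Next, I would exploit that $\WW^+$ is a trace-free self-adjoint endomorphism of the three-dimensional bundle $\Lambda^2_+$. Writing its eigenvalues as $\lambda_1,\lambda_2,\lambda_3$ with $\lambda_1+\lambda_2+\lambda_3=0$ and $\lambda_i\leq \omega$, the Newton/Cayley--Hamilton identity gives $\sum_i \lambda_i^3 = 3\lambda_1\lambda_2\lambda_3 = 3\det(\WW^+)$. Applying Lemma \ref{Wcubic} with $n=3$ and $s=\omega$ yields
\[
3\det(\WW^+) \;=\; \sum_i \lambda_i^3 \;\leq\; \tfrac{\omega}{2}\sum_i \lambda_i^2 \;=\; \tfrac{\omega}{2}\,|\WW^+|^2,
\]
where the last equality uses the operator-norm convention $|\WW^+|^2=\sum_i\lambda_i^2$. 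Consequently $18\det(\WW^+)\leq 3\omega\,|\WW^+|^2$.

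Substituting this into the reduced form of (\ref{bw41}) and invoking the pointwise hypothesis $6\omega \leq \SS$,
\[
\int_M |\nabla \WW^+|^2 \;\leq\; \int_M \Bigl(3\omega - \tfrac{\SS}{2}\Bigr)|\WW^+|^2 \;\leq\; 0.
\]
Hence $\nabla \WW^+ \equiv 0$, i.e.\ the self-dual Weyl tensor is parallel, which gives the desired conclusion.

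There is no real obstacle here once the preceding machinery is in place; the only point of care is keeping the norm conventions consistent between (\ref{bw41}) (operator norm, with $|\WW^+|^2 = \sum\lambda_i^2$) and Lemma \ref{Wcubic} (applied with $n=3$, $s=\omega$), together with the Newton identity that converts $\sum \lambda_i^3$ into $3\det(\WW^+)$. Once these match, the argument is a one-line assembly, entirely parallel in spirit to the higher-dimensional pointwise rigidity of Theorem \ref{mainpointwise}, except that in dimension four the favorable identity (\ref{bw41}) lacks the $\langle \Rc\circ g,\WW^2\rangle$ term, which is exactly why the sharper constant $\tfrac{\omega}{2}$ (rather than $\tfrac{2(n-1)}{3}\omega$) suffices.
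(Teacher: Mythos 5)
Your proof is correct and follows essentially the same route as the paper: specialize the integral identity (\ref{bw41}) to $\delta\WW^+\equiv 0$, then apply Lemma \ref{Wcubic} with $n=3$, $s=\omega$ (together with the trace-free Newton identity $\sum_i\lambda_i^3=3\det(\WW^+)$) to bound $18\det(\WW^+)\leq 3\omega|\WW^+|^2\leq \tfrac{\SS}{2}|\WW^+|^2$, forcing $\nabla\WW^+\equiv 0$. The only additions beyond the paper's sketch are the explicit Newton identity and the norm-convention check, both of which are handled correctly.
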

It is interesting that another condition also leads to the desired rigidity.

\begin{lemma} \label{pinched} Let $\lambda_1\geq \lambda_2\geq \lambda_3$ be eigenvalues of $\WW^+$. Suppose that,
\begin{equation} 
\label{farapart} \lambda_1\geq \frac{\SS}{6} \text{ and } \lambda_3\leq-\frac{\lambda_1}{2}-\lambda_1 \sqrt{\frac{3(\lambda_1-\frac{\SS}{6})}{4(3\lambda_1+\frac{\SS}{6})}}, 
\end{equation}
then we have,
\begin{equation*}
\SS|\WW^+|^2\geq 36\text{det}(\WW^+).
\end{equation*}
\end{lemma}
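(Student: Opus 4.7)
\medskip

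\noindent\textbf{Proof plan.} The key observation is that $\WW^+$ is self-adjoint and traceless as an operator on the three-dimensional space $\wedge^2_+$, so $\lambda_1 + \lambda_2 + \lambda_3 = 0$. This lets us eliminate the middle eigenvalue, writing $\lambda_2 = -\lambda_1 - \lambda_3$, and reduces the whole question to a single-variable inequality in $\lambda_3$ (with $\lambda_1$ and $\SS$ as parameters).

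First I would expand the two sides of the target inequality directly in terms of eigenvalues:
\begin{align*}
|\WW^+|^2 &= \lambda_1^2 + \lambda_2^2 + \lambda_3^2 = 2(\lambda_1^2 + \lambda_1\lambda_3 + \lambda_3^2),\\
\det(\WW^+) &= \lambda_1\lambda_2\lambda_3 = -\lambda_1\lambda_3(\lambda_1 + \lambda_3).
\end{align*}
Substituting into $\SS|\WW^+|^2 - 36\det(\WW^+) \geq 0$ and collecting powers of $\lambda_3$ converts the claim into the quadratic inequality
\[
(\SS + 18\lambda_1)\,\lambda_3^2 + \lambda_1(\SS + 18\lambda_1)\,\lambda_3 + \SS\,\lambda_1^2 \;\geq\; 0.
\]

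Next I would analyze this quadratic in $\lambda_3$. The hypothesis $\lambda_1 \geq \SS/6$ (together with $\lambda_1 > 0$, which we may assume since otherwise $\WW^+ \equiv 0$ and the conclusion is trivial) gives $\SS + 18\lambda_1 \geq 4\SS > 0$, so the parabola opens upward. Its discriminant factors nicely as
\[
\lambda_1^2(\SS+18\lambda_1)^2 - 4(\SS+18\lambda_1)\SS\lambda_1^2 \;=\; 3\lambda_1^2(\SS+18\lambda_1)(6\lambda_1 - \SS),
\]
which is nonnegative, again by $\lambda_1 \geq \SS/6$. Solving the quadratic yields roots
\[
\lambda_3^{\pm} \;=\; -\frac{\lambda_1}{2} \;\pm\; \lambda_1\sqrt{\frac{3(\lambda_1 - \SS/6)}{4(3\lambda_1 + \SS/6)}}.
\]

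Since the leading coefficient is positive, the quadratic is $\geq 0$ precisely when $\lambda_3 \leq \lambda_3^-$ or $\lambda_3 \geq \lambda_3^+$. The assumption (\ref{farapart}) is exactly the first of these alternatives, so the inequality holds and the lemma follows. The whole argument is essentially a single algebraic manipulation; the ``hard part'' is really just recognizing that $\lambda_2 = -\lambda_1 - \lambda_3$ transforms the stated curious bound on $\lambda_3$ into the smaller root of the natural quadratic obtained from $\SS|\WW^+|^2 - 36\det(\WW^+)$.
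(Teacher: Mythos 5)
Your proposal is correct and follows essentially the same route as the paper: the paper merely carries out the identical computation in the shifted, normalized variables $\mu_i=\lambda_i+\frac{\SS}{12}$ with $\sum\mu_i=3$, where your quadratic in $\lambda_3$ becomes its quadratic $(3\mu_1-1)\mu_3^2+(3\mu_1-1)(\mu_1-3)\mu_3+(3-\mu_1)\mu_1$, with the same discriminant factorization and the same identification of the hypothesis (\ref{farapart}) with the smaller root. One minor remark: the positivity of the leading coefficient $\SS+18\lambda_1$ should be justified either by $\SS>0$ (implicit in the paper, which normalizes $\SS/4=3$) or by noting that the square root in (\ref{farapart}) is only defined when $3\lambda_1+\frac{\SS}{6}>0$, rather than by the chain ``$\SS+18\lambda_1\geq 4\SS>0$,'' which silently assumes $\SS>0$.
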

\begin{proof}
Let $\mu_i=\lambda_i+\frac{\SS}{12}$. By direct computation, for $\SS_2=\sum_{i<j}\mu_i\mu_j, \SS_3=\prod_{i=1}^3 \mu_i$.
\[
\SS|\WW^+|^2- 36\text{det}(\WW^+)=\SS\SS_2-36\SS_3.\]
Without loss of generality, we can assume $\sum_{i=1}^3\mu_i=\frac{\SS}{4}=3$. Then we have,
\begin{align*}
\SS_2-3\SS_3 &= \mu_1\mu_3(1-3\mu_2)+(\mu_1+\mu_3)\mu_2,\\
&= \mu_1\mu_3-3\mu_1\mu_3(3-\mu_1-\mu_3)+(\mu_1+\mu_3)(3-\mu_1-\mu_3),\\
&=(3\mu_1-1)\mu_3^2+(3\mu_1-1)(\mu_1-3)\mu_3+(3-\mu_1)\mu_1.
\end{align*}
Next, we compute the determinant of that quadratic on $\mu_3$,
\begin{align*}
\Box &=(3\mu_1-1)^2(\mu_1-3)^2-4(3\mu_1-1)(3-\mu_1)\mu_1,\\
&=(3\mu_1-1)(\mu_1-3)(3\mu_1^2-10\mu_1+3+4\mu_1),\\
&=3(3\mu_1-1)(\mu_1-3)(\mu_1-1)^2
\end{align*}
%\textbf{Case 1:} If $\WW^{+}\leq \frac{\SS}{6}\text{Id}^{+}$ then $\lambda_i\leq \frac{\SS}{6}=2$. Consequently, $\mu_i=\lambda_i+\frac{\SS}{12}\leq 3$. Thus, $1\leq \mu_1\leq 3$ and the result follows.\\
%\textbf{Case 2:} Assume 
Since $\mu_1> 3$, the quadratic above is positive if and only if $\mu_3 \notin [x_1,x_2]$ where 
\[x_{1,2}=-\frac{\mu_1-3}{2}\pm(\mu_1-1)\sqrt{\frac{3(\mu_1-3)}{4(3\mu_1-1)}} \] 
As $\mu_3<0$ the only possibility is that $\mu_3\leq x_1$. The statement now follows from normalization and the fact that $\mu_i=\frac{\SS}{12}+\lambda_i$. 
\end{proof}
\begin{remark}
As a consequence, we also obtain a rigidity result assuming condition (\ref{farapart}) and harmonic Weyl tensor. 
\end{remark}

Next, we consider the proof of the integral rigidity statement, Theorem \ref{integralrigid}, in dimension four. First, without loss of generality, we can choose to work with $\WW^{+}$. %As explained earlier, the traceless part $\EE\circ g$ does not play a role in dimension four. 
Due to the discussion above, we have the sharp estimate:
\begin{equation}
2\left\langle{\WW,\WW^2+\WW^\sharp}\right\rangle =18\text{det}(\WW^+)\leq \sqrt{6}|\WW^{+}|^3.
\end{equation}
The equality happens only if either $\WW^+=0$ or it has exactly two distinct eigenvalues. 

%Also, the Kato's inequality is also improved under the condition of harmonic self-dual,
%\[|\nabla |\WW^+||\leq \frac{3}{5}|\nabla \WW^+|
%\]
Thus, $c_1(4)=\sqrt{6}$. Then proceeding as earlier, by H\"{o}lder and Sobolev inequalities, we obtain $\alpha=\frac{1}{3}$.  As a result, rigidity is obtained for a manifold with harmonic self-dual and the following inequality,
\[||\WW||_{L^{2}}\leq \frac{1}{2\sqrt{6}}\lambda(g).
\]
 \begin{remark} This result, however, is not new, see \cite{g00, fu16four}. And the inequality is sharp. \end{remark}
  
 % was able to obtain the optimal constant $\frac{1}{2 \sqrt{6}}$.
% Compare with Margerin, Gursky, CGY, Hai-Ping Fu. Think about other power of $|\WW|$. 
\subsection{Pure curvature}
\label{pure}
The pure curvature condition says that there is an orthonormal frame $\{e_i\}_{i=1}^n$at each point such that the curvature is diagonalized with respect to the basis $\{e_i\wedge e_j\}_{i\neq j}$. Then, the only nontrivial component of the Weyl tensor is $\WW_{ijij}\doteqdot w_{ij}$. It is observed that $w$ is a symmetric matrix with the following properties:
\begin{itemize}
\item Each diagonal term is zero;
\item The sum of each row is zero. 
\end{itemize}

In this case, we can compute $\left\langle{\WW,\WW^{2}}\right\rangle$ and $\left\langle{\WW,\WW^{\sharp}}\right\rangle$ as follows. 
\begin{align*}
\left\langle{\WW, \WW^\sharp}\right\rangle &= \sum_{i,j,k,l,p,q}\WW_{ijkl}\WW_{ipkq}\WW_{jplq}=6 \sum_{i,j,k}w_{ij}w_{ik}w_{kj};\\
\left\langle{\WW, \WW^2}\right\rangle &=\frac{1}{4}\sum_{i,j,k,l,p,q}\WW_{ijkl}\WW_{ijpq}\WW_{klpq}=2\sum_{ij}w_{ij}^3.
\end{align*} 
 
Recalling the following identity,
\[a^3+b^3+c^3-3abc=\frac{1}{2}(a+b+c)(3a^2+3b^2+3c^2-(a+b+c)^2).\]
By applying for each term $w_{ij}w_{ik}w_{kj}$,
\begin{align*}
\left\langle{\WW, \WW^\sharp}\right\rangle &=6 \sum_{i,j,k}w_{ij}w_{ik}w_{kj},\\
 &= \sum_{i\neq j\neq k}2(w_{ij}^3+w_{ik}^3+w_{jk}^3)+(w_{ij}+w_{jk}+w_{ik})^3\\
 &~~~~~-3(w_{ij}+w_{jk}+w_{ik})(w_{ij}^2+w_{ik}^2+w_{jk}^2),\\
&=2(n-2)\sum_{ij}w_{ij}^3+\sum_{i\neq j\neq k}(w_{ij}+w_{jk}+w_{ik})^3-3(n-4)\sum_{ij}w_{ij}^3,\\
&=\frac{8-n}{2}\left\langle{\WW, \WW^2}\right\rangle+\sum_{i\neq j\neq k}(w_{ij}+w_{jk}+w_{ik})^3
\end{align*} 
By (\ref{weylsect}), the last term is just a summation of the Weyl generalized sectional curvature on 3-planes. In dimension four, as $4-3=1$, that term vanishes. In dimension five, 
\[\sum_{i\neq j\neq k}(w_{ij}+w_{jk}+w_{ik})^3=\sum_{mn}w_{mn}^3=\frac{1}{2}\left\langle{\WW, \WW^2}\right\rangle. 
\]
Thus, when $n\leq 5$, $\left\langle{\WW, \WW^\sharp}\right\rangle =2\left\langle{\WW, \WW^2}\right\rangle$. There is little evidence to suggest the equation still holds true in higher dimension.

%\section{Proof without technicalities}

%\section{Technicalities}
%\subsection{Approximation of the distance function and control of error}
%\begin{enumerate}
%\item $(M,g)$ is an (universal) cover. \\
%w is periodic of period $\pi$. 

%\item \label{cutoff} Cut-off function:
%Fix a $C^2$ function $\psi: \mathbb{R}\rightarrow [0,1]$ such that 
%\[\psi(t)=\begin{cases} 0 &\mbox{if } t \leq 1 \\ 

%\item Volume upper bound in a neighborhood of the neck $r<\sqrt{\epsilon_2}$ of the scale $V(p,r)\leq cr^n$. $\epsilon_2$ determines how small the neck is.
%\item A Poincare inequality on $H_i^{\epsilon_2}=K$. For $f\in C_0^{1}$, $\int_{K} f=0$
%\[k\int_{K}f_i^2 \leq \int_{K}|\nabla f_i|^2.\]
%\item $dr^2+w^2 g_{\mathbb{S}^{n-1}}=g_{\mathbb{S}^{n}}$ for $w\in [\epsilon, \pi-\epsilon]$.\\
%$w=\sin{r}$ on that interval.
%\item A scaling invariant Sobolev inequality of the form, for $f\in C_0^\infty(B)$, $B=B(x,r)$, $q=v/(v-2)>1$, 
%\[(\int_B \phi^{2q} d\mu)^{1/q}\leq \frac{C_M r^2}{V(B)^{2/v}}\int_M(|\nabla \phi|^2+r^{-2}\phi^2)d\mu.\]
%It is possible due to \cite{sc04}[Theorem 5.15]. This is not sufficient because the constant depends on the size of the neck.

%\end{enumerate}

\def\cprime{$'$}
\bibliographystyle{plain}
\bibliography{bio}

\end{document}